\newtheorem{thm}{Theorem}[section]
\newtheorem{lem}{Lemma}[section]
\newtheorem{prop}{Proposition}[section]
\newtheorem{definition}{Definition}[section]
\newtheorem{df}{Definition}[section]
\newtheorem{remark}{Remark}[section]
\newcommand{\nc}{\newcommand}
\nc{\RBA}{{\mathrm{RBA}_\lambda}}
\nc{\sta}{\star_{A}}
\nc{\stb}{\star_{B}}
\nc{\s}{\star}
\nc{\C}{\mathrm{C}}
\nc{\ca}{\C_{\mathrm{Alg}}}
\nc{\crb}{\C_{\mathrm{RBO}_\lambda}}
\nc{\cra}{\C_{\mathrm{RBA}_\lambda}}
\nc{\cm}{\C_{\mathrm{mor}_\lambda}}
\nc{\hm}{\mathrm{H}_{\mathrm{mor}_\lambda}}
\newcommand{\field}{\mathbb{F}}
\newcommand{\Hom}{{\rm Hom}}
\newcommand{\End}{{\rm End}}
\newcommand{\Hr}{{\rm H}}
\newcommand{\Zr}{{\rm Z}}
\newcommand{\Br}{{\rm B}}
\newcommand{\Id}{\rm Id}
\let \t=\otimes
\begin{document}

\title[On the Cohomology of an associative Rota-Baxter operator morphism]{On the Cohomology of an  associative $\mathcal{O}$-operator morphism}



\author[Du]{Lei Du}
\address{School of Mathematical Sciences, Anhui University, Hefei, 230601, China}
\curraddr{}
\email{18715070845@163.com}
\author[Bao]{Yanhong Bao}
\address{School of Mathematical Sciences, Anhui University, Hefei, 230601, China}
\curraddr{}
\email{baoyh@ahu.edu.cn}
\author[Fu]{Dongxing Fu}
\address{School of Mathematical Sciences, Anhui University, Hefei, 230601, China}
\curraddr{}
\email{fudongxing920811@126.com}

\thanks{L. Du is the corresponding author. }

\subjclass[2010]{16D20,16E40}

\keywords{$\mathcal{O}$-operators; morphisms; cohomology; $r$-matrices.}

\date{}

\dedicatory{}

\begin{abstract}
Rota-Baxter operators and more generally $\mathcal{O}$-operators play a crucial role in broad areas of mathematics and physics, such as integrable systems, the Yang-Baxter equation and pre-Lie algebras. The main objects of study in the paper are certain $\mathcal{O}$-operator morphisms on associative algebras. The cohomology theory of an associative $\mathcal{O}$-operator morphism is  established. In development, we give the Cohomology Comparison Theorem of an $\mathcal{O}$-operator morphism, that is, the cohomology of an $\mathcal{O}$-operator morphism is isomorphic to the cohomology of an auxiliary $\mathcal{O}$-operator. As applications, we also study the Cohomology Comparison Theorem of a Rota-Baxter operator morphism (of weight zero) and an associative $r$-matrice weak morphism as
a particular case of $\mathcal{O}$-operator morphisms.
\end{abstract}

\maketitle

\section{Introduction}
Cohomology theories  have been developed with a great success on associative algebras \cite{GS0,GH}, Lie algebras \cite{CE} and group algebras \cite{SS}, which enable to control deformations problems.  The
idea of deforming a morphism of complex manifolds (in the special case where the target is fixed) has been considered by Kodaira \cite{K1, K2}.  Gerstenhaber and Schack \cite{ms1,ms2} develop a cohomology theory of algebra  morphisms to study  deformations of algebra morphisms, where a powerful result called the Cohomology
Comparison Theorem (CCT) is established.

The notion of $\mathcal{O}$-operators known as relative Rota-Baxter operators on associative algebras is a generalization of Rota-Baxter operators in the presence of bimodules. Rota-Baxter operators on associative algebras is introduced in 1960 by Baxter \cite{Bax1} to study a fluctuation theory in probability. Further, the relationship between Rota-Baxter operators and combinatorics is developed by Rota \cite{Rota1, Rota2}. Rota-Baxter operators have been showed many applications, for instance,   Connes-Kreimer’s  algebraic approach to the renormalization in perturbative quantum field theory \cite{Cn}, Yang-Baxter equation \cite{Bai1}. $\mathcal{O}$-operators is introduced by Uchino \cite{Uch} as an associative analogue of Poisson structures on a manifold. In \cite{Agu}, the author gives a class of interesting $\mathcal{O}$-operators induced by associative $r$-matrices. The cohomology theory of $\mathcal{O}$-operators had been absent for
a long time. Not until  recently has  the cohomology theory of $\mathcal{O}$-operators been concerned to study their deformations in \cite{Das, Tang}.

Our main objects of study in the paper are certain $\mathcal{O}$-operators on associative algebras. Exactly, we focus on cohomologies of associative $\mathcal{O}$-operator morphisms. Also, the Cohomology Comparison Theorem of $\mathcal{O}$-operator morphisms is given, which says that the cohomology of an $\mathcal{O}$-operator morphism is isomorphic to the cohomology of an auxiliary $\mathcal{O}$-operator. It has  been known earlier that Rota-Baxter operators of weight $0$ and associative $r$-matrices  \cite{Agu} are special cases of $\mathcal{O}$-operators. Therefore, as applications, we obtain the Cohomology Comparison Theorem (CCT) of  Rota-Baxter operator morphisms of weight zero and associative $r$-matrice weak morphisms.

The paper is organized as follows.  Section \ref{sect: O-operator}  provides some definitions of Rota-Baxter operators, $\mathcal{O}$-operators on associative algebras and their morphisms. Section \ref{sect: cohomology} sets up a cohomology theory for associative $\mathcal{O}$-operator morphisms and studies the Cohomology Comparison Theorem (CCT) of associative $\mathcal{O}$-operator morphisms, which says a cohomology of an associative $\mathcal{O}$-operator morphism is isomorphic to the cohomology of an auxiliary $\mathcal{O}$-operator. Section \ref{sect: applications} specializes to Rota-Baxter operators of weight $0$ on an associative  algebra $A$, regarded as $\mathcal{O}$-operators on $A$ with respect to the adjoint bimodule and focus on cohomologies of  $r$-matrice weak morphisms on an algebra
$A$, regarded as $\mathcal{O}$-operators on $A$ with respect to the coadjoint bimodule, thus the Cohomology Comparison Theorem of Rota-Baxter operator morphisms of weight zero and associative $r$-matrice weak morphisms are obtained.

In this paper, we work over a field $\field$ of characteristic 0 and unless otherwise specified, linear spaces, linear maps, $\otimes$, $\Hom$, $\End$ are defined over $\field$.

\section{$\mathcal{O}$-operators and their morphisms}\label{sect: O-operator}
In this section, we will recall Rota-Baxter operators, $\mathcal{O}$-operators on associative algebras and their morphisms. We first recall some basic concepts.

Let $(A, \cdot)$ be an associative algebra over  $\field$ and $M$ be a bimodule of $(A, \cdot)$. That is, there are linear maps $l : A \otimes M \rightarrow M, (a, m)\rightarrow l(a, m)$ and $r : M \otimes A \rightarrow M, (m, a) \rightarrow r(m, a)$ such that
$$l(a\cdot b, m)=l(a, l(b, m)), r(a,r(m,b))=r(l(a, m), b) \text{~and~} r(m, a\cdot b)=r(r(m,a), b),$$
for all $a, b\in A$. Thus, for each $a\in A$, there are maps $l_a : M \rightarrow M, m\rightarrow
l(a, m) \text{~and~} r_a : M\rightarrow M, m \rightarrow r(m, a)$. Sometimes we will write $am$ instead of $l(a, m)$ and $ma$ instead of $r(m, a)$
when there are no confusions.

It follows that the associative algebra $A$ is a bimodule over itself with the left and right
actions are given by the multiplication of $A$. We call this bimodule as adjoint bimodule.
The maps $l_a$ and $r_a$ for this bimodule are denoted by $ad^l_a$ and $ad^r_a$. The dual space $A^*$ also carries an $A$-bimodule (called coadjoint bimodule) structure with

$$l(a, f)(b) = f(b\cdot a) \text{~and~} r(f, a)(b) = f(a\cdot b),$$
for $a, b \in A$ and $f \in A^*$. The maps $l_a$ and $r_a$ for this bimodule are respectively denoted by $ad^{*l}_a$ and $ad^{*r}_a$.

\begin{df}\label{Def: Rota-Baxter algebra}{\rm (\cite{Fard-Guo,Guo1})}
Let $A$ be an associative algebra over  $\field$ and
$\lambda\in \field$. A linear
operator $R: A\rightarrow A$ is said to be a Rota-Baxter operator of
weight $\lambda$ if it satisfies
\begin{eqnarray}\label{Eq: Rota-Baxter relation}
R(a)R(b)=R\big(a R(b)+R(a) b+\lambda\  a b\big)\end{eqnarray}	
for any $a,b \in A$. In this case,  $(A,  R)$ is called a Rota-Baxter
algebra of weight $\lambda$.
\end{df}

The  concept of $\mathcal{O}$-operators (also called relative Rota-Baxter operators) are a generalization of Rota-Baxter operators in the presence of arbitrary bimodule.

\begin{df}\label{Def: operator}{\rm (\cite{Loday})}
Let $M$ be a bimodule of an associative algebra $A$. An $\mathcal{O}$-operator on $A$ with respect to the bimodule $M$ is  a linear map
$T_A: M\rightarrow A$ such that
\begin{align}
T_A(m)T_A(n)= T_A(mT_A(n)+T_A(m)n), \text{~for~all~} m, n\in M.\label{Eq: O-operator}
\end{align}
\end{df}

When $l_a$ and $r_a$ is the adjoint representation of $A$ denoted by $ad^l_a$ and $ad^r_a$, Eq. (\ref{Eq: O-operator}) reduces to Eq. (\ref{Eq: Rota-Baxter relation}) with
$\lambda = 0$, which means that a Rota-Baxter operator of weight 0 is an $\mathcal{O}$-operator on
$A$ with respect to the adjoint representation.

We refer to \cite{Das} to give the following results.
\begin{lem}\label{lem:star product}
Let $T_A: M\rightarrow A$ be an $\mathcal{O}$-operator. Then $M$ carries an associative algebra structure with the
product
\begin{align}
m \star_M m^\prime = mT_A(m^\prime) + T_A(m)m^\prime, \text{~for~} m, m^\prime \in M.\label{Eq: star-product}
\end{align}
We denote $(M,\star_M)$ by $M_\star$. Moreover, define
\begin{align}
&l_{T_A}: M \otimes A \rightarrow A, ~~~~~~~ l_{T_A} (m, a) = T_A(m)\cdot a - T_A(ma), \label{Eq: left star module}\\
&r_{T_A}: A \otimes M \rightarrow A, ~~~~~~~ r_{T_A} (a, m) = a\cdot T_A(m) - T_A(am), \label{Eq: right star module}
\end{align}
for $m\in M$ and $a\in A$. Then $(A, l_{T_A}, r_{T_A})$ is a bimodule over $M_\star$.
\end{lem}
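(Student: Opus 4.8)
The plan is to extract the single structural identity that makes everything work, namely that $T_A$ is \emph{multiplicative} from $(M,\star_M)$ to $(A,\cdot)$. Indeed, rewriting the defining equation \eqref{Eq: O-operator} gives
\begin{align}
T_A(m)\cdot T_A(m') = T_A(m \star_M m'), \qquad m, m' \in M. \label{Eq: multiplicative}
\end{align}
I would record \eqref{Eq: multiplicative} at the outset, since it converts every pure product $T_A(m)T_A(m')$ into a single $T_A$ of a $\star_M$-product, which is exactly what forces the bookkeeping to close up in the calculations that follow.

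For associativity of $\star_M$, I would expand $(m \star_M m') \star_M m''$ and $m \star_M (m' \star_M m'')$ directly from \eqref{Eq: star-product}. Each side produces a handful of terms once we push the actions through the three bimodule axioms of $M$ over $A$ (left-action associativity $(a\cdot b)m = a(bm)$, right-action associativity $m(a\cdot b) = (ma)b$, and the middle compatibility $a(mb) = (am)b$) together with the associativity of $A$ itself. Applying \eqref{Eq: multiplicative} to the resulting $T_A$-products then matches the two sides term by term. This step is symmetric and purely formal, with no appeal needed to \eqref{Eq: O-operator} beyond its repackaged form \eqref{Eq: multiplicative}.

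For the bimodule claim I would verify the three axioms for $(A, l_{T_A}, r_{T_A})$ over $M_\star$:
\begin{align*}
& l_{T_A}(m \star_M m', a) = l_{T_A}(m, l_{T_A}(m', a)), \\
& r_{T_A}(a, m \star_M m') = r_{T_A}(r_{T_A}(a, m), m'), \\
& l_{T_A}(m, r_{T_A}(a, m')) = r_{T_A}(l_{T_A}(m, a), m').
\end{align*}
In each case I would substitute the definitions \eqref{Eq: left star module}--\eqref{Eq: right star module}, push the outer action inside using the $M$-over-$A$ bimodule axioms and the associativity of $A$, and replace every $T_A(m \star_M m')$ by $T_A(m)T_A(m')$ via \eqref{Eq: multiplicative}. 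After this reduction the two sides agree up to a cluster of terms of the shape $T_A(x)T_A(y) - T_A(x T_A(y)) - T_A(T_A(x)y)$, which is precisely \eqref{Eq: O-operator} rearranged and hence vanishes for the appropriate choice of $x, y \in M$.

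The main obstacle is combinatorial rather than conceptual: keeping careful track of which bimodule axiom and which instance of \eqref{Eq: O-operator} to invoke so that the surviving terms cancel exactly. The compatibility axiom is the most delicate of the three, since it interleaves the left and right $T_A$-actions and requires \emph{two} separate applications of \eqref{Eq: O-operator} (one grouping the terms carrying $am'$, the other those carrying $ma$) before the difference of the two sides collapses to zero.
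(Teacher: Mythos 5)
Your proof is correct. The paper states this lemma without proof (it is quoted from Das \cite{Das}), and your verification is the natural direct one, organized around the key observation that \eqref{Eq: O-operator} says precisely that $T_A$ is multiplicative from $(M,\star_M)$ to $(A,\cdot)$. Your bookkeeping of where the $\mathcal{O}$-operator identity must actually be invoked — not at all for associativity once multiplicativity of $T_A$ is recorded, once each for the two outer bimodule axioms, and twice for the middle compatibility, with argument pairs $(m, am')$ and $(ma, m')$ — checks out against a term-by-term expansion.
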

Next we recall a morphism between $\mathcal{O}$-operators.
\begin{df}\label{Def: operator morphisms}{\rm(\cite{Das})}
A pair $(\phi, \psi)$ of an algebra morphism $\phi: A\rightarrow B$ and a linear map $\psi: M\rightarrow N$ is a morphism of of $\mathcal{O}$-operators from $T_A: M\rightarrow A$ to $T_B: N\rightarrow B$ if $(\phi, \psi)$ satisfies
\begin{align}
T_B\circ \psi &=\phi\circ T_A,\label{Eq:def morphism (1)}\\
\phi(a)\psi(m)&=\psi(am),\label{Eq:def morphism (2)}\\
\psi(m)\phi(a)&=\psi(ma)\label{Eq:def morphism (3)}.
\end{align}
\end{df}

Then we have the following lemma
\begin{lem}\label{lem:star algebra morphism}
Let $(\phi, \psi)$ be an $\mathcal{O}$-operator morphism from $T_A: M\rightarrow A$ to $T_B: N\rightarrow B$. Then
 $\psi$ is an algebra morphism from $M_\star$ to $N_\star$ and $\psi$ is denoted by $\psi_\star$.
\end{lem}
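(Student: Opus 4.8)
The plan is to verify directly that $\psi$ intertwines the two $\star$-products, that is, to show
\[
\psi(m \star_M m') = \psi(m) \star_N \psi(m') \quad \text{for all } m, m' \in M.
\]
Since $\psi$ is linear by hypothesis, this multiplicativity is precisely what is needed to conclude that $\psi \colon M_\star \to N_\star$ is an algebra morphism. The only ingredients required are the definition of the $\star$-product from Lemma \ref{lem:star product} together with the three compatibility conditions \eqref{Eq:def morphism (1)}, \eqref{Eq:def morphism (2)} and \eqref{Eq:def morphism (3)} that define an $\mathcal{O}$-operator morphism.

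First I would expand the left-hand side using \eqref{Eq: star-product} and the linearity of $\psi$, obtaining
\[
\psi(m \star_M m') = \psi\big(m T_A(m')\big) + \psi\big(T_A(m) m'\big),
\]
and then treat the two summands separately. For the first summand I read $m T_A(m')$ as the right action $r(m, T_A(m'))$ and apply \eqref{Eq:def morphism (3)} with $a = T_A(m')$ to push $\psi$ through, followed by \eqref{Eq:def morphism (1)} to replace $\phi(T_A(m'))$ by $T_B(\psi(m'))$; this produces $\psi(m) T_B(\psi(m'))$. For the second summand I read $T_A(m) m'$ as the left action $l(T_A(m), m')$ and apply \eqref{Eq:def morphism (2)} with $a = T_A(m)$, then \eqref{Eq:def morphism (1)} to replace $\phi(T_A(m))$ by $T_B(\psi(m))$, producing $T_B(\psi(m)) \psi(m')$. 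Adding the two results and comparing with the $\star$-product on $N$ built from $T_B$ gives exactly $\psi(m) \star_N \psi(m')$, as desired.

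The computation is entirely routine, so there is no genuine obstacle; the only point demanding care is bookkeeping. One must match each morphism condition to the correct factor — invoking the right-action compatibility \eqref{Eq:def morphism (3)} for the term carrying $T_A(m')$ on the right, and the left-action compatibility \eqref{Eq:def morphism (2)} for the term carrying $T_A(m)$ on the left — and apply \eqref{Eq:def morphism (1)} at the right moment to convert $\phi \circ T_A$ into $T_B \circ \psi$. Keeping the left and right bimodule actions straight, and not confusing which of \eqref{Eq:def morphism (2)} and \eqref{Eq:def morphism (3)} governs which summand, is the main source of potential error.
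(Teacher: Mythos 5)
Your proposal is correct and follows essentially the same computation as the paper: expand $\psi(m\star_M m')$ via Eq.~(\ref{Eq: star-product}), push $\psi$ through the bimodule actions, convert $\phi\circ T_A$ to $T_B\circ\psi$ via Eq.~(\ref{Eq:def morphism (1)}), and reassemble. In fact your bookkeeping is slightly more careful than the paper's, which cites only Eq.~(\ref{Eq:def morphism (3)}) where the summand $T_A(m)m'$ actually requires the left-action compatibility Eq.~(\ref{Eq:def morphism (2)}), as you correctly note.
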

\begin{proof}
Only to show that for any $m, m^{\prime} \in M$,
\begin{align*}
\psi(m\star m^{\prime})&\stackrel{\text{Eq.} (\ref{Eq: star-product})}{=}\psi(mT_A(m^{\prime})+T_A(m)m^{\prime})\\
&\stackrel{\text{Eq.} (\ref{Eq:def morphism (3)})}{=}\psi(m)\phi(T_A(m^{\prime}))+\phi(T_A(m))\psi(m^{\prime})\\
&\stackrel{\text{Eq.} (\ref{Eq:def morphism (1)})}{=}\psi(m)T_B(\psi(m^{\prime}))+T_B(\psi(m))\psi(m^{\prime})\\
&\stackrel{\text{Eq.} (\ref{Eq: star-product})}{=}\psi(m)\star \psi(m^{\prime}).
\end{align*}
\end{proof}

Recall the definition of a bimodule over an associative algebra morphism is given in Gerstenhaber and Schack \cite{ms1}. Let $\alpha:A\rightarrow A^{\prime}$ be a morphism of associative algebras, then a $\alpha$-bimodule is a triple $\langle M, M^{\prime}, \beta\rangle$ such
that $M$ is a  bimodule over $A$, $M^{\prime}$ is a  bimodule over $A^{\prime}$, and $\beta: M\rightarrow M^\prime$
is an $A$-bimodule morphism, where $M^{\prime}$ is considered as a bimodule over associative algebra $A$ in a natural way.

By Lemma \ref{lem:star product}, let $(A, l_{T_A}, r_{T_A})$ be a bimodule of $M_\star$ and $(B, l_{T_B}, r_{T_B})$ a bimodule of $N_\star$.
Certainly, $B$ is  a bimodule over associative algebra $M_\star$ by
\begin{align}
M_\star\otimes B\longrightarrow^{(\psi_\star, \Id)}N_\star\otimes B\longrightarrow^{l_{T_B}} B
\end{align}
and $\phi: A\rightarrow B$ is a $M_\star$-bimodule morphism, denote by $\phi_\star$.
Then we have the following observation, which may play an important role to study the cohomology theory of $\mathcal{O}$-operator morphism in the next section.
\begin{lem}\label{lem:star algebra bimodule}
Let $(\phi, \psi)$ be an $\mathcal{O}$-operator morphism of from $T_A: M\rightarrow A$ to $T_B: N\rightarrow B$. Then $\langle A, B, \phi_\star\rangle$ is a bimodule over  an algebra morphism  $\psi_\star: M_\star \rightarrow N_\star$.
\end{lem}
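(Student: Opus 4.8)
The plan is to verify the three conditions in the definition of a bimodule over an algebra morphism (in the sense of \cite{ms1} recalled just above) for the data $\alpha = \psi_\star\colon M_\star \to N_\star$ and the triple $\langle A, B, \phi_\star\rangle$. Two of the conditions are already available: by Lemma \ref{lem:star product} applied to $T_A$, the triple $(A, l_{T_A}, r_{T_A})$ is a bimodule over $M_\star$, and the same lemma applied to $T_B$ makes $(B, l_{T_B}, r_{T_B})$ a bimodule over $N_\star$; restricting scalars along the algebra morphism $\psi_\star$, as in the displayed composite preceding the statement, turns $B$ into a bimodule over $M_\star$. Hence the entire content of the lemma reduces to the third condition, namely that $\phi_\star = \phi\colon A \to B$ is a morphism of $M_\star$-bimodules.

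The key step is to spell out the two $M_\star$-actions to be compared and then match them. On $A$ they are $l_{T_A}(m,a) = T_A(m)\cdot a - T_A(ma)$ and $r_{T_A}(a,m) = a\cdot T_A(m) - T_A(am)$, while on $B$, pulled back along $\psi_\star$, the element $m\in M_\star$ acts by $l_{T_B}(\psi(m),-)$ and $r_{T_B}(-,\psi(m))$. Thus I must establish the two identities $\phi(l_{T_A}(m,a)) = l_{T_B}(\psi(m),\phi(a))$ and $\phi(r_{T_A}(a,m)) = r_{T_B}(\phi(a),\psi(m))$ for all $m\in M$ and $a\in A$.

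For the left identity I would expand $\phi(l_{T_A}(m,a)) = \phi(T_A(m))\cdot\phi(a) - \phi(T_A(ma))$ using that $\phi$ is an algebra morphism, then rewrite $\phi(T_A(m)) = T_B(\psi(m))$ and $\phi(T_A(ma)) = T_B(\psi(ma))$ by Eq. (\ref{Eq:def morphism (1)}), and finally replace $\psi(ma) = \psi(m)\phi(a)$ by Eq. (\ref{Eq:def morphism (3)}); the outcome is exactly $T_B(\psi(m))\cdot\phi(a) - T_B(\psi(m)\phi(a)) = l_{T_B}(\psi(m),\phi(a))$. The right identity is entirely symmetric, using Eq. (\ref{Eq:def morphism (2)}) in place of Eq. (\ref{Eq:def morphism (3)}) to rewrite $\psi(am) = \phi(a)\psi(m)$.

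There is no genuine obstacle here: the argument is a direct computation whose only delicate point is bookkeeping — keeping straight which products are multiplications in $A$ or in $B$, which of the symbols $ma$, $am$, $\psi(m)\phi(a)$, $\phi(a)\psi(m)$ denote bimodule actions, and matching them correctly against the definitions of $l_{T_A}, r_{T_A}, l_{T_B}, r_{T_B}$. The three defining relations of an $\mathcal{O}$-operator morphism are each used once — the intertwining of $T_A$ with $T_B$, and compatibility with the left and right actions — together with $\phi$ being an algebra morphism, and they align precisely to yield the two required bimodule-morphism identities.
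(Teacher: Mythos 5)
Your proposal is correct and follows the same route as the paper, which simply asserts (in the discussion preceding the lemma) that $B$ becomes an $M_\star$-bimodule by restriction along $\psi_\star$ and that $\phi$ is then an $M_\star$-bimodule morphism, leaving the verification to the reader. Your computation of $\phi(l_{T_A}(m,a)) = l_{T_B}(\psi(m),\phi(a))$ and $\phi(r_{T_A}(a,m)) = r_{T_B}(\phi(a),\psi(m))$ via Eqs.~(\ref{Eq:def morphism (1)}), (\ref{Eq:def morphism (2)}), (\ref{Eq:def morphism (3)}) supplies exactly the details the paper omits.
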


\section{The cohomology of an associative $\mathcal{O}$-operator morphism}\label{sect: cohomology}
In this section, our aim is to show  the cohomology theory of an associative $\mathcal{O}$-operator morphism and give the the Cohomology Comparison Theorem of an $\mathcal{O}$-operator morphism.

\subsection{The cohomology of an associative $\mathcal{O}$-operator morphism.} Recall the cohomology theory of an associative $\mathcal{O}$-operator given by Das \cite{Das}.
Let $T_A: M\rightarrow A$ be an $\mathcal{O}$-operator. By Lemma \ref{lem:star product}, we  obtain an $M_\star$-bimodule $(A,l_{T_A}, r_{T_A})$. Therefore, we may consider the corresponding Hochschild cohomology of $M_\star$ with coefficients in $(A,l_{T_A}, r_{T_A})$.
More precisely, we define
$$C^n(M_\star, A): =\Hom(M_\star^{\otimes n}, A), \text{~for~} n\geqslant 0$$

and the differential given by
\begin{align}
d_{M,A}(a)(m)&=l_{T_A}(m, a)-r_{T_A}(a, m)\label{Eq: 0 differential of O-operator}\\
&=T_A(m)\cdot a - T_A(ma)- a\cdot T_A(m) + T_A(am)  \text{~for~} a\in A=C^0(M_\star, A) \nonumber
\end{align}

and

\begin{align}
&(d_{M,A}f)(m_1,...,m_{n+1}) \label{Eq: n differential of O-operator}\\
=&T_A(m_1)\cdot f(m_2,...,m_{n+1}) - T_A(m_1f(m_2,...,m_{n+1}))\nonumber\\
&+\sum^n_{i=1}(-1)^i f(m_1,...,m_{i-1}, m_iT_A(m_{i+1}) + T_A(m_i)m_{i+1},...,m_{n+1})\nonumber\\
&+ (-1)^{n+1} f(m_1,...,m_n)\cdot T_A(m_{n+1})\nonumber\\
&-(-1)^{n+1}T_A(f(m_1,...,m_n)m_{n+1}).\nonumber
\end{align}

Denote the group of $n$-cocycles by $\Zr^n(M_\star, A)$ and the group of $n$-coboundaries
by $\Br^n(M_\star, A)$. The corresponding cohomology groups are defined by $\Hr^n(M_\star, A) = \Zr^n(M_\star, A)/\Br^n(M_\star, A), n \geqslant 0$.
\begin{definition}{\rm (\cite{Das})}\label{lem:cohomo of O-operator}
The cohomology groups $H^{n}(M_\star, A)$ is denoted as the $n$-cohomology of an $\mathcal{O}$-operator $T_A: M\rightarrow A$.
\end{definition}
Then we construct the cohomology of an $\mathcal{O}$-operator morphism. Let $(\phi, \psi)$ be an $\mathcal{O}$-operator morphism from $T_A: M\rightarrow A$ to $T_B: N\rightarrow B$. By Lemma \ref{lem:star algebra morphism} and Lemma \ref{lem:star algebra bimodule}, we have that
 $\psi_\star: M_\star\rightarrow N_\star$ is an algebra morphism and $(A, B, \phi)$ is the bimodule over $\psi_\star: M_\star\rightarrow N_\star$.

We may consider the corresponding Hochschild cochain $(C^{\bullet}(M_\star, A), d^{\bullet}_{M,A})$ of $M_\star$ with coefficients in $A$, the corresponding Hochschild cochain $(C^{\bullet}(N_\star, B), d^{\bullet}_{N,B})$ of $N_\star$ with coefficients in $B$ and the corresponding Hochschild cochain $(C^{\bullet}(M_\star, B), d^{\bullet}_{M,B})$ of $M_\star$ with coefficients in $B$, where in the latter $B$ is viewed as a $M_\star$-bimodule by virtue of $\psi_\star$. Take $X^{\bullet}=C^{\bullet}(M_\star, A)\oplus C^{\bullet}(N_\star, B)$ and $Y^{\bullet}= C^{\bullet}(M_\star, B)$, where the differential of $X^{\bullet}$ is $\delta^n_{X}=\begin{pmatrix}d^n_{M,A}&0\\0& d^n_{N,B})\end{pmatrix}$ and the differential of $Y^{\bullet}$ is $\delta^n_{Y}=d^n_{M,B}$. Let $\alpha\in C^n(M_\star, A)$ and $\beta\in C^n(N_\star, B)$, define a chain map $f^n: X^n\rightarrow Y^n$ by
\begin{align}\label{Eq: chain map 1}
f^n(\alpha, \beta)=\phi_\ast\alpha-\psi_\star^\ast\beta,
\end{align}
where $\phi_\ast: C^n(M_\star, A)\rightarrow C^n(M_\star, B)$ and $\psi_\star^\ast: C^n(N_\star, B)\rightarrow C^n(M_\star, B)$ are the morphisms induced by $\phi$ and $\psi_\star$, respectively.  $W^\bullet=X^\bullet \oplus Y^{\bullet-1}$  is a complex with coboundary operator
\begin{align}
\delta_{Z}^n=
\begin{pmatrix}
\delta^n_X&0\\
f^n&-\delta_Y^n
\end{pmatrix},
\end{align}
where $W^\bullet$ is known as the mapping cylinder of $f^\bullet$. Then $W^n=C^n(M_\star, A)\oplus C^n (N_\star, B)\oplus C^{n-1}(M_\star, B)$ is denoted by $C^n(\psi_\star, \phi)$ and its differential is
\begin{align}\label{Eq: chain map 2}
\delta_{W}^n(\alpha,\beta,\gamma)=(d_{M,A}^n\alpha, d_{N,B}^n\beta, \phi\alpha-\beta\psi_\star^{\otimes n}-d_{M,B}^{n-1}\gamma),
\end{align}
for $(\alpha, \beta, \gamma)\in C^n(M_\star, A)\oplus C^n (N_\star, B)\oplus C^{n-1}(M_\star, B)$.

\begin{definition}\label{def: cohomology operator morphism}
Denote  the space of n-cocyles(resp. n-coboundaries) of the complex $(W^\bullet, \delta_{W}^\bullet)$ by $Z_n(\psi_\star, \phi)$
(resp. $B_n(\psi_\star, \phi)$).
Then $H^\bullet(\psi_\star, \phi)=Z_n(\psi_\star, \phi)/B_n(\psi_\star, \phi)$  is defined as the cohomology of an $\mathcal{O}$-operator morphism $(\phi, \psi)$.
\end{definition}

\begin{prop}\label{prop: 1-cocycle}
Let $(\phi, \psi)$ be an $\mathcal{O}$-operator morphism from $T_A: M\rightarrow A$ to $T_B: N\rightarrow B$.
Then $(T_A, T_B, 0)$ is a $1$-cocycle on the cohomology of the $\mathcal{O}$-operator morphism $(\phi, \psi)$.
\end{prop}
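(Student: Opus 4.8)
The plan is to compute $\delta_W^1(T_A, T_B, 0)$ directly from \eqref{Eq: chain map 2} and verify that all three components vanish. Writing $(\alpha,\beta,\gamma)=(T_A,T_B,0)$ with $T_A\in C^1(M_\star,A)$, $T_B\in C^1(N_\star,B)$ and $0\in C^0(M_\star,B)$, the definition of the differential gives
\[
\delta_W^1(T_A,T_B,0)=\bigl(d_{M,A}^1 T_A,\ d_{N,B}^1 T_B,\ \phi T_A-T_B\psi_\star^{\otimes 1}-d_{M,B}^0(0)\bigr),
\]
so the proposition reduces to three independent vanishing statements, one per slot.

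First I would treat the first slot. Specializing the coboundary formula \eqref{Eq: n differential of O-operator} to $n=1$ and $f=T_A$, and evaluating on a pair $(m_1,m_2)$, I would expand the five summands; after rewriting the middle term $T_A(m_1T_A(m_2)+T_A(m_1)m_2)$, the whole sum collects into an expression proportional to $T_A(m_1)T_A(m_2)-T_A\bigl(m_1T_A(m_2)+T_A(m_1)m_2\bigr)$. This is exactly the difference of the two sides of the $\mathcal{O}$-operator relation \eqref{Eq: O-operator}, hence it is zero. Thus $d_{M,A}^1 T_A=0$, i.e.\ $T_A$ is a $1$-cocycle in its own cohomology; this is the expected manifestation of the principle that the operator defining a structure is a Maurer--Cartan element.

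The second slot is identical in form: running the same expansion with $(N,B,T_B)$ in place of $(M,A,T_A)$ reduces $d_{N,B}^1 T_B$ to the $\mathcal{O}$-operator relation for $T_B$, which again yields $0$. For the third slot I first note $d_{M,B}^0(0)=0$ by linearity, so it remains to show $\phi T_A-T_B\psi_\star^{\otimes 1}=0$ in $C^1(M_\star,B)=\Hom(M,B)$. Unwinding the induced maps, $\phi T_A=\phi\circ T_A$ and $T_B\psi_\star^{\otimes 1}=T_B\circ\psi$ as linear maps $M\to B$, so the required identity is precisely $\phi\circ T_A=T_B\circ\psi$, which is the first compatibility axiom \eqref{Eq:def morphism (1)} of an $\mathcal{O}$-operator morphism. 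Hence the third slot vanishes as well, and $\delta_W^1(T_A,T_B,0)=0$.

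There is no genuine obstacle in this argument: each of the three slots is forced by one of the defining identities, namely the $\mathcal{O}$-operator relations for $T_A$ and for $T_B$, and the intertwining condition \eqref{Eq:def morphism (1)} for the morphism. The only step demanding care is the sign- and term-bookkeeping in the $n=1$ expansion of \eqref{Eq: n differential of O-operator}, where one must confirm that the combination of the five summands reproduces (a scalar multiple of) the $\mathcal{O}$-operator relation and not some other quadratic expression in $T_A$.
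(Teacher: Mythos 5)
Your proposal is correct and follows essentially the same route as the paper: expand the three slots of $\delta_W^1(T_A,T_B,0)$, reduce the first two via the $n=1$ case of the coboundary formula to twice the $\mathcal{O}$-operator relation for $T_A$ and $T_B$ respectively, and reduce the third to the intertwining identity $\phi\circ T_A=T_B\circ\psi$. The paper's computation confirms your "proportional to" is precisely a factor of $2$.
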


\begin{proof}
For $m_1, m_2\in M$ and $n_1, n_2\in N$, we have
\begin{align*}
&d^1_{M,A}(T_A)\\
\stackrel{\text{Eq.} (\ref{Eq: n differential of O-operator})}{=}&
T_A(m_1)T_A(m_2)-T_A(m_1T_A(m_2))-T_A(m_1T_A(m_2)+T_A(m_1)m_2)\\
&+T_A(m_1)T_A(m_2)-T_A(T_A(m_1)m_2)\\
=&2(T_A(m_1)T_A(m_2)-T_A(m_1T_A(m_2)+T_A(m_1)m_2))\\
\stackrel{\text{Eq.} (\ref{Eq: O-operator})}{=}&0,
\end{align*}
and
\begin{align*}
&d^1_{N,B}(T_B)\\
\stackrel{\text{Eq.} (\ref{Eq: n differential of O-operator})}{=}&
T_B(n_1)T_B(n_2)-T_B(n_1T_B(n_2))-T_B(n_1T_B(n_2)+T_B(n_1)n_2)\\
&+T_B(n_1)T_B(n_2)-T_B(T_B(n_1)n_2)\\
=&2(T_B(n_1)T_B(n_2)-T_B(n_1T_B(n_2)+T_B(n_1)n_2))\\
\stackrel{\text{Eq.} (\ref{Eq: O-operator})}{=}&0.
\end{align*}
Then
\begin{align*}
&\delta_{W}^1(T_A, T_B, 0)\\
=&(d^1_{M,A}(T_A), d^1_{N,B},\phi T_A-T_B\psi_{\star}-d^{0}_{M,B}(0))\\
=&(0, 0, \phi T_A-T_B\psi_{\star})\\
\stackrel{\text{Eq.} (\ref{Eq:def morphism (1)})}{=}&
0.
\end{align*}
\end{proof}

\begin{remark}{\rm (Infinitesimal deformations.)}
Algebraic deformation theory is firstly described due to the seminal work of Gerstenhaber \cite{GS1}. Then Gerstenhaber and Schack \cite{ms1} develop a cohomology theory of algebraic morphisms to study  deformations of algebraic morphisms. In the Remark, we may use the above cohomologies to describe infinitesimal deformations of $\mathcal{O}$-operator morphisms.

Let $(\phi, \psi)$ be an $\mathcal{O}$-operator morphism from $T_A: M\rightarrow A$ to $T_B: N\rightarrow B$. Let  $T_{A,t}: M[[t]]\rightarrow A[[t]]$(resp.  $T_{B,t}: N[[t]]\rightarrow B[[t]]$) be an $\mathcal{O}$-operators on $A[[t]]$(resp. $B[[t]]$) with respect to the
bimodule $M[[t]]$(resp. $N[[t]]$). A deformation $(\phi_t, \psi_t)$ of $(\phi, \psi)$ is an $\mathcal{O}$-operator morphism from $T_{A,t}$ to $T_{B,t}$  such that $\phi_t= \phi_0+t\phi_1+t^2\phi_2+\cdot\cdot\cdot$ and $\psi_t= \psi_0+t\psi_1+t^2\psi_2+\cdot\cdot\cdot$, where every $(\phi_i,\psi_i) (n=0,1,2,\cdot\cdot\cdot)$ is an $\mathcal{O}$-operator morphism from $T_A$ to $T_B$. $(\text{Here }\phi_0=\phi,\psi_0=\psi.)$ Write $T_{A,t}=T_{A,0}+tT_{A,1}+t^2T_{A,2}+\cdot\cdot\cdot$ and $T_{B,t}=T_{B,0}+tT_{B,1}+t^2T_{B,2}+\cdot\cdot\cdot$.
We claim that the triple $(T_{A, 1}, T_{B, 1}, 0)$ is an element of $\Zr^1(\phi,\psi)$ and its cohomology class may be viewed as the infinitesimal of the deformation.

That $(\phi_t, \psi_t): T_{A, t}\rightarrow T_{B, t}$ is an $\mathcal{O}$-operator deduces that
\begin{enumerate}
\item[{\rm (1)}] $T_{A,t}(m_1) T_{A,t}(m_2) = T_{A,t}(m_1T_{A,t}(m_2) + T_{A,t}(m_1)m_2),  \text{for } m_1, m_2\in M$.
\item[{\rm (2)}] $T_{B,t}(n_1) T_{B,t}(n_2) = T_{B,t}(n_1T_{B,t}(n_2) + T_{B,t}(n_1)n_2),  \text{for } n_1, n_2\in M$.
\item[{\rm (3)}] $T_{B,t}\circ \psi_t =\phi_t\circ T_{A,t}$.
\end{enumerate}
Comparing first-order terms, we have
\begin{align}&T_A(m_1)T_{A,1}(m_2) + T_{A,1}(m_1) T_A(m_2) \nonumber\\
=& T_A(m_1T_{A,1}(m_2) + T_{A,1}(m_1)m_2) + T_{A,1}(m_1T_A(m_2) + T_A(m_1)m_2),  \text{for } m_1, m_2\in M. \label{Eq: T_A1}\\
&T_B(n_1)T_{B,1}(n_2) + T_{B,1}(n_1) T_B(n_2) \nonumber\\
=& T_B(n_1T_{B,1}(n_2) + T_{B,1}(n_1)n_2) + T_{B,1}(n_1T_B(n_2) + T_B(n_1)n_2),  \text{for } n_1, n_2\in N. \label{Eq: T_B1}\\
&T_{B,1}\circ \psi_0+ T_{B,0}\circ \psi_1 =\phi_1\circ T_{A,0}+\phi_0\circ T_{A,1}.\label{Eq: psi_1 phi_1}
\end{align}

Then Eq. {\rm (\ref{Eq: T_A1})} and Eq. {\rm (\ref{Eq: T_B1})} deduce that $d_{M,A}^1(T_{A,1})=0$ and $d_{N,B}^1(T_{B,1})=0$.

Due to  Eq. {\rm (\ref{Eq: psi_1 phi_1})} and $T_{B}\circ \psi_1=\phi_1\circ T_{A}$ inferred from $(\phi_1,\psi_1)$ being an $\mathcal{O}$-operator from $T_A$ to $T_B$, we have
$T_{B,1}\circ \psi=\phi\circ T_{A,1}$.

Therefore, $\delta_{W}^1(T_{A, 1}, T_{B, 1}, 0)=(d_{M,A}^1(T_{A,1}), d_{N,B}^1(T_{B,1}), \phi T_{A,1}-T_{B,1}\psi_{\star}-d^{0}_{M,B}(0))=0$,
i.e., $(T_{A, 1}, T_{B, 1}, 0)$ is $1$-cocycle.   \flushright{$\square$}
\end{remark}

\subsection{The Cohomology Comparison Theorem of an $\mathcal{O}$-operator morphism.} We will give the Cohomology Comparison Theorem (CCT) of  $\mathcal{O}$-operator morphisms, which may say that the cohomology of an $\mathcal{O}$-operator morphism is isomorphic to the cohomology of an auxiliary $\mathcal{O}$-operator.

We recall some results of the bimodule of an algebra morphism given by Gerstenhaber and Schack \cite{ms1, ms2}.
Let $\phi:A\rightarrow B$ be a morphism of associative algebras and  $\langle M, N, \psi\rangle$ be a $\phi$-bimodule.
Denote the cochain complex of $\phi:A\rightarrow B$ with coefficients in $\langle M, N, \psi\rangle$  by $C^\bullet(\phi,\psi)$, where $\C^n(\phi,\psi)=C^n(A,M)\oplus C^n(B,N)\oplus C^{n-1}(A,N), n\geqslant 1$.
\begin{lem}{\rm (\cite{ms1})}\label{lem:phi!}
Suppose that $\phi: A\rightarrow B$ is an algebra morphism  and $\langle M, N, \psi\rangle$ is a $\phi$-bimodule. Then
\begin{enumerate}
\item[{\rm (1)}]
$\phi!=A+B+B\phi$ is an associative algebra with
$a\cdot b=b\cdot a=\phi\cdot b=a\cdot \phi=\phi^2=0$ and $\phi\cdot a=\phi(a)\phi$, that is,  for any $a+b_1+b_2\phi, a^\prime+b^\prime_1+b_2^\prime\phi\in A+B+B\phi$,
\begin{align}
(a+b_1+b_2\phi)(a^\prime+b^\prime_1+b_2^\prime\phi)&=a a^\prime+b_1 b_1^\prime+b_2\phi\cdot a^\prime+b_1b_2^\prime\cdot\phi \label{Eq: phi product}\\
&=a a^\prime+b_1 b_1^\prime+(b_2\phi(a^\prime)+b_1b_2^\prime)\cdot\phi.\nonumber
\end{align}

\item[{\rm (2)}] $\psi!=M+N+N\phi$ is a $\phi!$-bimodule with $\phi m=\psi(m)\phi$, i.e., the bimodule struction is defined by
\begin{align}
(a+b_1+b_2\phi)(m+n_1+n_2\phi)=am+b_1n_1+(b_2\psi(m)+b_1n_2)\phi,\label{Eq:phi left module}\\
(m+n_1+n_2\phi)(a+b_1+b_2\phi)=ma+n_1b_1+(n_2\phi(a)+n_1b_2)\phi,\label{Eq:phi right module}
\end{align}
for  $a+b_1+b_2\phi\in A+B+B\phi$, $m+n_1+n_2\phi\in M+N+N\phi$.
\end{enumerate}
\end{lem}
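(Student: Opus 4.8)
The plan is to prove both parts by direct verification, working from the explicit bilinear product (\ref{Eq: phi product}) and the bilinear actions (\ref{Eq:phi left module}) and (\ref{Eq:phi right module}); the generator relations listed in the statement are simply the values of these formulas on the three component summands $A$, $B$ and $B\phi$, so it suffices to check associativity and the bimodule axioms on arbitrary elements. Throughout I would keep track of the three components separately, noting that the many mixed products (such as $a\cdot b$ or $a\cdot\phi$) vanish, so that most cross terms disappear and only a few genuinely interacting terms survive.

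For (1), writing $x=a+b_1+b_2\phi$, $y=a'+b_1'+b_2'\phi$ and $z=a''+b_1''+b_2''\phi$, I would expand $(xy)z$ and $x(yz)$ using (\ref{Eq: phi product}) and compare them componentwise. The $A$-component of both sides is $aa'a''$ and the $B$-component is $b_1b_1'b_1''$, so these match by associativity of $A$ and of $B$. The only nontrivial comparison is in the $B\phi$-component, where $(xy)z$ contributes a term $b_2\phi(a')\phi(a'')$ while $x(yz)$ contributes $b_2\phi(a'a'')$, the remaining terms $b_1b_2'\phi(a'')$ and $b_1b_1'b_2''$ agreeing on the nose. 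These two coincide precisely because $\phi$ is an algebra morphism, $\phi(a'a'')=\phi(a')\phi(a'')$; this is the single place where multiplicativity of $\phi$ is used, and it is really the heart of the computation.

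For (2), I would check the three bimodule axioms, namely left associativity $(xy)\mu=x(y\mu)$, right associativity $\mu(xy)=(\mu x)y$, and the compatibility $(x\mu)y=x(\mu y)$ for $x,y\in\phi!$ and $\mu=m+n_1+n_2\phi\in\psi!$, by the same component-by-component expansion using (\ref{Eq:phi left module}) and (\ref{Eq:phi right module}). The $M$- and $N$-components agree because $M$ is an $A$-bimodule and $N$ is a $B$-bimodule. As in part (1), the decisive matching occurs in the twisted $N\phi$-component: here a term of the form $b_2\psi(a'm)$ produced by one bracketing must be reconciled with $b_2\phi(a')\psi(m)$ produced by the other, and this forces the use of the hypothesis that $\psi$ is an $A$-bimodule morphism when $N$ is regarded as an $A$-bimodule through $\phi$, that is $\psi(am)=\phi(a)\psi(m)$ and $\psi(ma)=\psi(m)\phi(a)$. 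Since there is no genuine obstacle beyond careful bookkeeping, the main thing to get right is to identify, in each twisted component, exactly which structural hypothesis (multiplicativity of $\phi$, or $\psi$ being a morphism over $\phi$) is being invoked, and to confirm that all remaining cross terms cancel thanks to the vanishing mixed products.
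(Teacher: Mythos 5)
Your verification is mathematically correct, but note that the paper itself offers no proof of this lemma at all: it is imported verbatim from Gerstenhaber--Schack \cite{ms1}, so there is no in-paper argument to compare against. Your direct component-wise check is exactly the standard argument, and you correctly isolate the two places where a hypothesis is genuinely consumed: in the $B\phi$-component of $(xy)z=x(yz)$ the terms $b_2\phi(a')\phi(a'')$ and $b_2\phi(a'a'')$ agree precisely because $\phi$ is multiplicative, and in the $N\phi$-component of the module axioms the terms $b_2\psi(a'm)$ versus $b_2\phi(a')\psi(m)$ (and dually $b_2\psi(ma')$ versus $b_2\psi(m)\phi(a')$) agree precisely because $\psi$ is an $A$-bimodule map when $N$ is viewed as an $A$-bimodule through $\phi$; the middle compatibility $(x\mu)y=x(\mu y)$ and the remaining associativity $\mu(xy)=(\mu x)y$ reduce to the same two facts. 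The only point worth making explicit, which you gesture at but do not state, is that the "generator relations" $a\cdot b=b\cdot a=\phi\cdot b=a\cdot\phi=\phi^2=0$ are what guarantee that no other cross terms survive the expansion, so the three displayed components really are exhaustive; with that said, your proof is complete and self-contained where the paper relies on a citation.
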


We denote the corresponding Hochschild cochain  of $\phi!$ with coefficients in $\psi!$ by $\C^\bullet_{\mathrm{Alg}}(\phi!,\psi!)$.
The following lemma  is the Cohomology Comparison Theorem (CCT) of an associative algebra morphism given by Gerstenhaber and Schack \cite{ms2}.

\begin{lem}{\rm(\cite{ms2})}\label{lem:CCT of classic algebra}
Let $\phi:A\rightarrow B$ be an associative algebra morphism, $\langle M,N ,\psi\rangle$ be a bimodule of $\phi$. Define
$\tau^\bullet_\phi:\C^\bullet(\phi,\psi)\rightarrow \C^\bullet_{\mathrm{Alg}}(\phi!,\psi!)$ as follows:
for $f=(f^A,f^B,f^{AB})\in \C^n(\phi,\psi)$, $\tau^n_\phi f$ defined by
\begin{align*}
&\tau^n_\phi f|_{B^{\t n}}=f^B, \tau_\phi f|_{A^{\t n}}=f^A;\\
&\tau^n_\phi f(b\phi,a_2,\cdots,a_n)=f^B(b,\phi(a_2),\cdots,\phi(a_n))\phi +b f^{AB}(a_2,\cdots,a_n)\phi, \\
&\textmd{for}~ ~(b\phi,a_2,\cdots,a_n)\in B\phi\t A^{n-1};\\
&\tau^n_\phi f(b_1,\cdots,b_{r-1},b_r\phi,a_{r+1},\cdots,a_n)=
f^B(b_1,\cdots,b_{r-1},b_r,\phi(a_{r+1}),\cdots,\phi(a_n))\phi,\\
&\textmd{for}~ ~(b_1,\cdots,b_{r-1},b_r\phi,a_{r+1},\cdots,a_n)\in B^{r-1}\t B\phi\t A^{n-r};\\
&\tau^n_\phi f(x_1,\cdots,x_n)=0, \indent\indent otherwise.
\end{align*}
Then $\tau^\bullet_\phi:\C^\bullet(\phi,\psi)\rightarrow \C^\bullet_{\mathrm{Alg}}(\phi!,\psi!)$ is a quasi-isomorphism.
\end{lem}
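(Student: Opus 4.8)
The plan is to argue in two stages: first that $\tau^\bullet_\phi$ is a morphism of cochain complexes, and then that it is a quasi-isomorphism, the latter being the substantive part.

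\emph{Stage 1: $\tau^\bullet_\phi$ is a cochain map.} I would verify directly that $\tau^{n+1}_\phi\c\partial=d\c\tau^n_\phi$, where $\partial$ denotes the differential of $\C^\bullet(\phi,\psi)$ and $d$ the Hochschild differential of $\ca^\bullet(\phi!,\psi!)$. Since $\C^\bullet(\phi,\psi)$ is the mapping cone assembling $\C^\bullet(A,M)$, $\C^\bullet(B,N)$, and the comparison map $\C^\bullet(B,N)\to \C^\bullet(A,N)$ induced by $\phi$ and $\psi$, it is enough to evaluate both sides on each homogeneous tensor word $V_1\t\cdots\t V_{n+1}$ with $V_i\in\{A,B,B\phi\}$ and to read off the component of the output lying in $M$, $N$, or $N\phi$. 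The relations $ab=ba=\phi b=a\phi=\phi^2=0$ and $\phi\cdot a=\phi(a)\phi$ of Lemma~\ref{lem:phi!}, together with the module formulas~(\ref{Eq:phi left module})--(\ref{Eq:phi right module}), annihilate all but a handful of terms in the Hochschild differential; on the surviving words one checks that $d\,(\tau^n_\phi f)$ reproduces exactly the three components of the cone differential of $(f^A,f^B,f^{AB})$. This is routine but bookkeeping-heavy.

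\emph{Stage 2: the quasi-isomorphism.} The guiding observation is that $\phi!$ is a triangular extension: by~(\ref{Eq: phi product}) it is the triangular matrix algebra with diagonal blocks $B$ and $A$ and off-diagonal bridge $B\phi$, where $B\phi$ is the $(B,A)$-bimodule on which $A$ acts on the right through $\phi$; the coefficient bimodule $\psi!$ has the matching triangular shape. Choosing the basis of tensor words $V_1\t\cdots\t V_n$, the formula for $\tau^n_\phi$ shows that it is injective and identifies $\C^\bullet(\phi,\psi)$ with the subcomplex of $\ca^\bullet(\phi!,\psi!)$ concentrated on the words $A^{\t n}$, $B^{\t n}$, and $B^{\t(r-1)}\t B\phi\t A^{\t(n-r)}$, with output in the matching summand of $\psi!$. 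It therefore suffices to show that the quotient complex $\ca^\bullet(\phi!,\psi!)/\im\tau^\bullet_\phi$ is acyclic, after which the long exact cohomology sequence of $0\to\C^\bullet(\phi,\psi)\to\ca^\bullet(\phi!,\psi!)\to\ca^\bullet(\phi!,\psi!)/\im\tau^\bullet_\phi\to 0$ forces $\tau^\bullet_\phi$ to be a quasi-isomorphism.

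\emph{The main obstacle.} The crux is proving acyclicity of this quotient complex. Its classes are carried by the \emph{mixed} words, in which $A$-, $B$-, and $\phi$-factors interleave in patterns other than $B^{\t(r-1)}\t B\phi\t A^{\t(n-r)}$, and such words are not in general killed by the Hochschild differential. I would filter the complex by $\phi$-degree (the number of $B\phi$-factors) and build, on each page of the associated spectral sequence, an explicit extra-degeneracy contracting homotopy --- essentially inserting or deleting a boundary $A$- or $B$-factor so as to convert a mixed word into an admissible one --- then check that this homotopy is compatible with the filtration. Constructing this homotopy and verifying that it contracts the quotient is, I expect, the technical heart of the proof; everything else is formal.
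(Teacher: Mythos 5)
The paper offers no proof of this lemma: it is quoted verbatim from Gerstenhaber and Schack \cite{ms2} as a known result (the Cohomology Comparison Theorem for an algebra morphism), so there is no in-paper argument to compare yours against. Judged on its own terms, your outline correctly reconstructs the overall strategy of \cite{ms2}: verify that $\tau^\bullet_\phi$ is a cochain map by casewise evaluation on tensor words in $\{A,B,B\phi\}$ using the relations $ab=ba=\phi b=a\phi=\phi^2=0$, $\phi\cdot a=\phi(a)\phi$; observe that $\phi!$ is a triangular matrix algebra with bridge $B\phi$; use injectivity of $\tau^\bullet_\phi$ to reduce the quasi-isomorphism claim to acyclicity of the cokernel complex supported on the mixed words; and attack that acyclicity by filtering on the number of $B\phi$-factors and contracting each layer.

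The genuine gap is that the decisive step is announced rather than executed. You write that you ``would'' build an extra-degeneracy contracting homotopy on each page of the spectral sequence by ``inserting or deleting a boundary $A$- or $B$-factor,'' but you never define the homotopy, never verify $sd+ds=\mathrm{id}$ on the mixed words, and never check compatibility with the filtration --- and this is exactly where all the content of the theorem lives. The difficulty is not cosmetic: on a word containing two or more $B\phi$-factors adjacent products such as $B\phi\otimes B\phi$ are killed by $\phi\cdot b=\phi^2=0$, while the end-terms of the Hochschild differential move values between the $M$, $N$ and $N\phi$ summands of $\psi!$ via $\phi m=\psi(m)\phi$, so the naive ``insert a unit'' homotopy does not obviously commute with the filtration or square to zero against the differential; Gerstenhaber and Schack's actual verification of this point occupies the bulk of \cite{ms2}. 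A secondary, smaller imprecision: the image of $\tau^\bullet_\phi$ is not the full space of cochains supported on admissible words (the value on $B\phi\otimes A^{\otimes(n-1)}$ couples $f^B$ and $f^{AB}$), so your identification of $\C^\bullet(\phi,\psi)$ with a ``subcomplex concentrated on'' those words should be replaced by the correct statement that $\tau^\bullet_\phi$ is an injective cochain map and one passes to its cokernel. As it stands your text is a faithful plan for the proof in \cite{ms2}, not a proof.
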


By Lemma \ref{lem:star algebra morphism} and Lemma \ref{lem:star algebra bimodule}, for an $\mathcal{O}$-operator morphism $(\phi, \psi):T_A\rightarrow T_B$, we have $\psi_\star:M_\star\rightarrow N_\star$ is an algebra morphism and $\langle A, B, \phi_\star\rangle$ is a  bimodule of  an algebra morphism  $\psi_\star: M_\star \rightarrow N_\star$. Due to Lemma \ref{lem:phi!}, $(\psi_\star !=M_\star+N_\star+N_\star\psi_\star, \ast)$ is an associative algebra, which product $\ast$ is defined by
\begin{align}
&(m+n_1+n_2\psi_\star)\ast(m^\prime+n_1^\prime+n_2^\prime\psi_\star)\nonumber\\
=&m\star_M m^\prime+n_1\star_Nn_1^\prime+(n_2\star_N\psi_\star(m^\prime)+n_1\star_N n_2^\prime)\psi_\star\label{Eq: *-product}
\end{align}
for $m+n_1+n_2\psi_\star, m^\prime+n_1^\prime+n_2^\prime\psi_\star\in \psi_\star!$
and
$(\phi_\star !=A+B+B\psi_\star, l_\star,r_\star)$ is a bimodule of $\psi_\star !$, which module action is
\begin{align}
&l_\star(m+n_1+n_2\psi_\star, a+b_1+b_2\psi_\star)\nonumber\\
=&l_{T_A}(m,a)+l_{T_B}(n_1,b_1)+(l_{T_B}(n_2,\phi(a))+l_{T_B}(n_1,b_2)))\psi_\star,\label{Eq:left action l_star}\\
&r_\star(a+b_1+b_2\psi_\star, m+n_1+n_2\psi_\star)\nonumber\\
=&r_{T_A}(a, m)+r_{T_B}(b_1,n_1)+(r_{T_B}(b_2,\psi_\star(m))+r_{T_B}(b_1,n_2))\psi_\star\label{Eq:right action l_star}
\end{align}
for $m+n_1+n_2\psi_\star \in \psi_\star !$ and $a+b_1+b_2\psi_\star\in\phi_\star !$. On account of Lemma \ref{lem:CCT of classic algebra},  it follows that

\begin{lem}\label{lem: CCT used}
Let $(\phi, \psi)$ be an $\mathcal{O}$-operator morphism. Then there is a quasi-isomorphism between $C^\bullet(\psi_\star, \phi)$ and $C^\bullet(\psi_{\star}!, \phi_\star!)$, i.e., the cohomology $H^\bullet(\psi_\star, \phi)$ of an $\mathcal{O}$-operator morphism $(\phi, \psi)$ is isomorphic to the corresponding Hochschild cohomology $H^\bullet(\psi_{\star}!, \phi_\star!)$ of $\psi_\star!$ with coefficients in $\phi_\star!$.
\end{lem}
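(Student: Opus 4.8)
The plan is to reduce Lemma \ref{lem: CCT used} to the already-established Cohomology Comparison Theorem for associative algebra morphisms (Lemma \ref{lem:CCT of classic algebra}). By Lemma \ref{lem:star algebra morphism} and Lemma \ref{lem:star algebra bimodule}, the $\mathcal{O}$-operator morphism $(\phi,\psi)$ gives rise to an honest associative algebra morphism $\psi_\star:M_\star\rightarrow N_\star$ together with a $\psi_\star$-bimodule $\langle A,B,\phi_\star\rangle$. The key observation is that the complex $C^\bullet(\psi_\star,\phi)$ constructed in Definition \ref{def: cohomology operator morphism} is, by its very definition as the mapping cylinder of $f^\bullet$, nothing but the Gerstenhaber--Schack morphism complex $\C^\bullet(\psi_\star,\phi_\star)$ attached to the algebra morphism $\psi_\star$ with coefficients in the $\psi_\star$-bimodule $\langle A,B,\phi_\star\rangle$. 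So the first step is to make this identification precise: compare the term-by-term description $W^n=C^n(M_\star,A)\oplus C^n(N_\star,B)\oplus C^{n-1}(M_\star,B)$ and its differential \eqref{Eq: chain map 2} against the definition $\C^n(\phi,\psi)=C^n(A,M)\oplus C^n(B,N)\oplus C^{n-1}(A,N)$ used in Lemma \ref{lem:CCT of classic algebra}, checking that under the substitution $(A,B,\phi,M,N,\psi)\mapsto(M_\star,N_\star,\psi_\star,A,B,\phi_\star)$ the differentials agree.

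Once that identification is in place, the second step is to invoke Lemma \ref{lem:CCT of classic algebra} directly: applied to the algebra morphism $\psi_\star:M_\star\rightarrow N_\star$ and its bimodule $\langle A,B,\phi_\star\rangle$, it produces a quasi-isomorphism
\begin{align*}
\tau^\bullet_{\psi_\star}:\C^\bullet(\psi_\star,\phi_\star)\rightarrow \C^\bullet_{\mathrm{Alg}}(\psi_\star!,\phi_\star!).
\end{align*}
The third step is to verify that the target $\C^\bullet_{\mathrm{Alg}}(\psi_\star!,\phi_\star!)$ is exactly the complex $C^\bullet(\psi_\star!,\phi_\star!)$ named in the statement, that is, the Hochschild cochain complex of the associative algebra $\psi_\star!=M_\star+N_\star+N_\star\psi_\star$ of \eqref{Eq: *-product} with coefficients in the $\psi_\star!$-bimodule $\phi_\star!=A+B+B\psi_\star$ whose actions are recorded in \eqref{Eq:left action l_star} and \eqref{Eq:right action l_star}. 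Here one should confirm that the product $\ast$ and the bimodule actions $l_\star,r_\star$ written out above are precisely the instances of the general formulas \eqref{Eq: phi product}, \eqref{Eq:phi left module} and \eqref{Eq:phi right module} of Lemma \ref{lem:phi!} under the same substitution; this is a matter of matching the $\star_M$, $\star_N$, $l_{T_A}$, $l_{T_B}$ pieces slot by slot.

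The main obstacle I expect is purely bookkeeping rather than conceptual: one must be scrupulous about the dualized roles of the objects. In Gerstenhaber--Schack the morphism $\phi:A\rightarrow B$ is of algebras and $\psi:M\rightarrow N$ is the bimodule map, whereas in our setting it is $\psi_\star$ that is the algebra morphism and $\phi_\star$ that plays the role of the bimodule map. Thus the notational conventions of the two frameworks are, in a sense, transposed, and the only real danger is mismatching left/right actions or mislabeling a cochain factor during the translation. Concretely, I would fix the dictionary once and for all at the start, then check that the cylinder differential \eqref{Eq: chain map 2} transports under this dictionary to the Gerstenhaber--Schack differential on $\C^\bullet(\psi_\star,\phi_\star)$, using \eqref{Eq: left star module} and \eqref{Eq: right star module} to recognize the module structures. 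With the dictionary validated, the quasi-isomorphism and hence the claimed isomorphism on cohomology $H^\bullet(\psi_\star,\phi)\cong H^\bullet(\psi_\star!,\phi_\star!)$ follow immediately from Lemma \ref{lem:CCT of classic algebra}, and the lemma is proved.
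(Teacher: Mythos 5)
Your proposal is correct and follows exactly the route the paper takes: the paper states Lemma \ref{lem: CCT used} as an immediate consequence of Lemma \ref{lem:CCT of classic algebra} applied to the algebra morphism $\psi_\star:M_\star\rightarrow N_\star$ with the $\psi_\star$-bimodule $\langle A,B,\phi_\star\rangle$, after noting (via Lemmas \ref{lem:star algebra morphism}, \ref{lem:star algebra bimodule} and \ref{lem:phi!}) that $C^\bullet(\psi_\star,\phi)$ is the Gerstenhaber--Schack complex of that morphism and that $\psi_\star!$, $\phi_\star!$ carry the structures \eqref{Eq: *-product}--\eqref{Eq:right action l_star}. Your bookkeeping dictionary makes explicit the identification the paper leaves implicit, but the argument is the same.
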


By Lemma \ref{lem:phi!}, it follows that the following lemma to construct a new $\mathcal{O}$-operator from an $\mathcal{O}$-operator morphism.

\begin{lem}\label{lem: new o-operator}
Let $(\phi, \psi)$ be an $\mathcal{O}$-operator morphism from $T_A: M\rightarrow A$ to $T_B: N\rightarrow B$. Suppose that $T_{\phi!}: \psi!\rightarrow \phi!$ is a linear map defined by
\begin{align}
T_{\phi!}(m+n_1+n_2\phi)=T_A(m)+T_B(n_1)+T_B(n_2)\phi\label{eq: T_phi}
\end{align}
for $m+n_1+n_2\phi\in\psi!$,
where $\psi!=M+N+N\phi$ and $\phi!=A+B+B\phi$.
Then $T_{\phi!}$ is an $\mathcal{O}$-operator.
\end{lem}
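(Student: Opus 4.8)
The plan is to verify the defining identity \eqref{Eq: O-operator} for $T_{\phi!}$ by a direct expansion. By Lemma~\ref{lem:phi!}(2) the space $\psi!$ is already a bimodule over the associative algebra $\phi!$, and $T_{\phi!}$ is linear by its very definition \eqref{eq: T_phi}, so nothing remains except to check that
\[
T_{\phi!}(x)\,T_{\phi!}(y)=T_{\phi!}\big(x\,T_{\phi!}(y)+T_{\phi!}(x)\,y\big)
\]
for all $x,y\in\psi!$. First I would write $x=m+n_1+n_2\phi$ and $y=m'+n_1'+n_2'\phi$ and expand each side into its three homogeneous components lying in $A$, in $B$, and in $B\phi$. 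On the left this uses the product \eqref{Eq: phi product} of $\phi!$; on the right it uses the two bimodule actions \eqref{Eq:phi left module} and \eqref{Eq:phi right module} to compute $x\,T_{\phi!}(y)$ and $T_{\phi!}(x)\,y$ inside $\psi!$, after which $T_{\phi!}$ is applied componentwise. The whole problem then reduces to matching three identities, one per slot.

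The $A$- and $B$-slots are immediate. In the $A$-slot the left side reads $T_A(m)T_A(m')$ while the right side reads $T_A\big(m\,T_A(m')+T_A(m)\,m'\big)$, and these agree precisely because $T_A$ is an $\mathcal{O}$-operator, i.e.\ by \eqref{Eq: O-operator}; equivalently this is $T_A(m\star_M m')$ with $\star_M$ as in \eqref{Eq: star-product}. The $B$-slot is the identical computation with $T_B$, $n_1$, $n_1'$ and $\star_N$ in place of $T_A$, $m$, $m'$ and $\star_M$. Thus these two slots contribute nothing beyond the $\mathcal{O}$-operator axioms for $T_A$ and $T_B$ taken separately.

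The $B\phi$-slot is where the morphism hypotheses actually enter, and I expect it to be the only genuine obstacle. After expansion the left side is $T_B(n_2)\,\phi(T_A(m'))+T_B(n_1)T_B(n_2')$, whereas the right side is $T_B(\nu)$ with
\[
\nu=n_2\,\phi(T_A(m'))+T_B(n_2)\,\psi(m')+n_1\,T_B(n_2')+T_B(n_1)\,n_2'.
\]
To reconcile them I would first invoke the morphism relation \eqref{Eq:def morphism (1)} to replace $\phi(T_A(m'))$ by $T_B(\psi(m'))$; this turns $T_B(n_2)\,\phi(T_A(m'))$ into $T_B(n_2)T_B(\psi(m'))$, which by \eqref{Eq: O-operator} for $T_B$ equals $T_B\big(n_2\,T_B(\psi(m'))+T_B(n_2)\,\psi(m')\big)=T_B\big(n_2\,\phi(T_A(m'))+T_B(n_2)\,\psi(m')\big)$. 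A second application of \eqref{Eq: O-operator}, now to the pair $(n_1,n_2')$, rewrites $T_B(n_1)T_B(n_2')$ as $T_B\big(n_1\,T_B(n_2')+T_B(n_1)\,n_2'\big)$. Adding the two expressions and using the linearity of $T_B$ collapses everything to $T_B(\nu)$, matching the right side. The crucial point, and the one place where the operators $T_A$, $T_B$ and the morphism $\phi$ are genuinely coupled, is thus the single substitution $\phi\circ T_A=T_B\circ\psi$, which allows the "mixed" term in the $B\phi$-component to be absorbed by the $\mathcal{O}$-operator relation for $T_B$. I note that the remaining morphism conditions \eqref{Eq:def morphism (2)} and \eqref{Eq:def morphism (3)} are not needed here, since every action appearing in the $B\phi$-slot is already an $N$-over-$B$ action.
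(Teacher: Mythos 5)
Your proposal is correct and follows essentially the same route as the paper's proof: expand both sides of \eqref{Eq: O-operator} using the product \eqref{Eq: phi product} and the actions \eqref{Eq:phi left module}--\eqref{Eq:phi right module}, then close the gap in the $B\phi$-component via the single substitution $\phi\circ T_A=T_B\circ\psi$ from \eqref{Eq:def morphism (1)} together with the $\mathcal{O}$-operator identities for $T_A$ and $T_B$. The only difference is presentational (you organize the verification slot by slot, the paper runs one chain of equalities), and your closing remark is accurate with the caveat that \eqref{Eq:def morphism (2)} and \eqref{Eq:def morphism (3)} are still needed implicitly, via Lemma~\ref{lem:phi!}(2), to make $\psi!$ a $\phi!$-bimodule in the first place.
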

\begin{proof}
Only to show that $T_{\phi!}$ satisfies Eq. (\ref{Eq: O-operator}). For $m+n_1+n_2\phi, m^\prime+n^\prime_1+n^\prime_2\phi\in\psi!$,
\begin{align*}
&T_{\phi!}(m+n_1+n_2\phi)T_{\phi!}(m^\prime+n^\prime_1+n^\prime_2\phi)\\
\stackrel{\text{Eq.} (\ref{eq: T_phi})}{=}&(T_A(m)+T_B(n_1)+T_2(n_2)\phi)(T_A(m^\prime)+T_B(n_1^\prime)+T_2(n_2^\prime)\phi)\\
\stackrel{\text{Eq.} (\ref{Eq: phi product})}{=}& T_A(m)T_A(m^\prime)+T_B(n_1)T_B(n_1^\prime)+(T_B(n_1)T_B(n_2^\prime)+T_B(n_2)\phi(T_A(m^\prime)))\phi\\
\stackrel{\text{Eq.} (\ref{Eq:def morphism (1)})}{=}&
T_A(m)T_A(m^\prime)+T_B(n_1)T_B(n_1^\prime)+(T_B(n_1)T_B(n_2^\prime)+T_B(n_2)T_B(\psi(m^\prime)))\phi\\
\stackrel{\text{Eq.} (\ref{Eq: O-operator})}{=}&
T_A(mT_A(m^\prime)+T_A(m)m^\prime)+T_B(n_1T_B(n_1^\prime)+T_B(n_1)n_1^\prime)\\
&+(T_B(n_1T_B(n_2^\prime)+T_B(n_1)n_2^\prime)+T_B(n_2T_B(\psi(m^\prime))+T_B(n_2)\psi(m^\prime)))\phi\\
\stackrel{\text{Eq.} (\ref{eq: T_phi})}{=}&
T_{\phi!}(mT_A(m^\prime)+T_A(m)m^\prime+n_1T_B(n_1^\prime)+T_B(n_1)n_1^\prime+(n_1T_B(n_2^\prime)+T_B(n_1)n_2^\prime)\phi)\\
&+(n_2T_B(\psi(m^\prime))+T_B(n_2)\psi(m^\prime))\phi)\\
\stackrel{\text{Eq.} (\ref{Eq:def morphism (1)})}{=}&
T_{\phi!}(mT_A(m^\prime)+n_1T_B(n_1^\prime)+(n_2\phi(T_A(m^\prime)+n_1T_B(n_2^\prime))\phi\\ &+T_A(m)m^\prime+T_B(n_1)n_1^\prime+(T_B(n_1)n_2^\prime+T_B(n_2)\psi(m^\prime))\phi)\\
\stackrel{\text{Eq.} (\ref{Eq:phi left module})(\ref{Eq:phi right module})}{=}&
T_{\phi!}((m+n_1+n_2\phi)(T_A(m^\prime)+T_B(n_1^\prime)+T_B(n_2^\prime)\phi)\\
&+(T_A(m)+T_B(n_1)+T_2(n_2)\phi)(m^\prime+n^\prime_1+n^\prime_2\phi))\\
\stackrel{\text{Eq.} (\ref{eq: T_phi})}{=}&
T_{\phi!}((m+n_1+n_2\phi)T_{\phi!}(m^\prime+n^\prime_1+n^\prime_2\phi)+T_{\phi!}(m+n_1+n_2\phi)(m^\prime+n^\prime_1+n^\prime_2\phi)).
\end{align*}
\end{proof}

By Lemma \ref{lem:star product}, the $\mathcal{O}$-operator $T_{\phi!}: \psi!\rightarrow \phi!$ given by Lemma \ref{lem: new o-operator}, may induce an associative algebra $(\psi!_\star, \star_{\psi!})$ and the product $\star_{\psi!}$ means that
\begin{align}
&(m+n_1+n_2\phi)\star_{\psi!}(m^\prime+n_1^\prime+n_2^\prime\phi)\nonumber\\
\stackrel{\text{Eq.} (\ref{Eq: star-product})}{=}&
(m+n_1+n_2\phi)T_{\phi!}(m^\prime+n_1^\prime+n_2^\prime\phi)+T_{\phi!}(m+n_1+n_2\phi)(m^\prime+n_1^\prime+n_2^\prime\phi).\label{Eq: !star-product}
\end{align}

and a bimodule  $(\phi!_\star, l_{T_{\phi!}}, r_{T_{\phi!}})$ over $\psi!_\star$, where $l_{T_{\phi!}}, r_{T_{\phi!}}$ is defined by

\begin{align}
&l_{T_{\phi!}}(m+n_1+n_2\phi, a+b_1+b_2\phi) \nonumber\\
=&T_{\phi!}(m+n_1+n_2\phi)(a+b_1+b_2\phi)-T_{\phi!}((m+n_1+n_2\phi)(a+b_1+b_2\phi), \label{Eq: left_T_phi}\\
&r_{T_{\phi!}}(a+b_1+b_2\phi, m+n_1+n_2\phi) \nonumber\\
=&(a+b_1+b_2\phi)T_{\phi!}(m+n_1+n_2\phi)-T_{\phi!}((a+b_1+b_2\phi)(m+n_1+n_2\phi)). \label{Eq: right T phi}
\end{align}

By Definition \ref{lem:cohomo of O-operator},  the corresponding Hochschild cohomology $H^n(\psi!_\star, \phi!_\star)$ of $\psi!_\star$ with coefficients in $\phi!_\star$ is considered as the cohomology of $T_{\phi!}$.

In the last of this section, we may show that the cohomology of an $\mathcal{O}$-operator morphism $(\phi, \psi)$ is isomorphic to the cohomology of the $\mathcal{O}$-operator $T_{\phi!}$ induced by $(\phi, \psi)$. For this, we need the following lemma

\begin{lem}\label{lem: Psi iso}
Let $(\phi,\psi)$ be an $\mathcal{O}$-operator from $T_A: M\rightarrow A$ to $T_B: N\rightarrow B$. Then there exists an algebra isomorphism
$\Psi:  (\psi!_{\star},\star_{\psi!})\rightarrow  (\psi_{\star}!, \ast)$ defined by for $m+n_1+n_2\phi\in \psi!_{\star}$ and $m+n_1+n_2\psi_\star \in \psi_{\star}!$,
\begin{align*}
\Psi (m+n_1+n_2\phi)= m+n_1+n_2\psi_\star,
\end{align*}
where $(\psi!_{\star},\star_{\psi!})$ and $(\psi_{\star}!, \ast)$ are defined by Eq.{\rm (\ref{Eq: !star-product})} and Eq. {\rm (\ref{Eq: *-product})}.
\end{lem}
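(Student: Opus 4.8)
The plan is to verify directly that $\Psi$ is a bijective algebra homomorphism. Since both $\psi!_{\star}$ and $\psi_{\star}!$ share the underlying vector space $M\oplus N\oplus N$ (the symbol $\phi$ in $\psi!$ and the symbol $\psi_\star$ in $\psi_\star!$ serving merely as formal bookkeeping markers for the third summand), the assignment $m+n_1+n_2\phi\mapsto m+n_1+n_2\psi_\star$ is visibly a linear isomorphism, with inverse obtained by replacing $\psi_\star$ by $\phi$. Hence the entire content of the lemma is the multiplicativity identity $\Psi(x\star_{\psi!}y)=\Psi(x)\ast\Psi(y)$ for $x=m+n_1+n_2\phi$ and $y=m'+n_1'+n_2'\phi$.

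To establish this I would first expand the left-hand product $x\star_{\psi!}y$ by its definition in Eq. (\ref{Eq: !star-product}), which writes it as $x\,T_{\phi!}(y)+T_{\phi!}(x)\,y$. Substituting the formula (\ref{eq: T_phi}) for $T_{\phi!}$, each summand becomes an instance of the $\phi!$-bimodule action on $\psi!$, so I would evaluate the first via the right action (\ref{Eq:phi right module}) and the second via the left action (\ref{Eq:phi left module}). Collecting terms by the three summands, the $M$-component reads $mT_A(m')+T_A(m)m'$ and the $N$-component reads $n_1T_B(n_1')+T_B(n_1)n_1'$, which by Eq. (\ref{Eq: star-product}) are exactly $m\star_M m'$ and $n_1\star_N n_1'$.

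The only genuinely substantive step occurs in the $N\phi$-component. Expanding the two actions there produces the four terms $n_2\,\phi(T_A(m'))$, $T_B(n_2)\,\psi(m')$, $n_1T_B(n_2')$ and $T_B(n_1)n_2'$. Here I would invoke the morphism compatibility (\ref{Eq:def morphism (1)}), namely $\phi\circ T_A=T_B\circ\psi$, to rewrite $\phi(T_A(m'))=T_B(\psi(m'))$; this is precisely what allows the first two terms to fuse into $n_2T_B(\psi(m'))+T_B(n_2)\psi(m')=n_2\star_N\psi(m')$ and the last two into $n_1\star_N n_2'$. Thus $x\star_{\psi!}y=(m\star_M m')+(n_1\star_N n_1')+\big(n_2\star_N\psi(m')+n_1\star_N n_2'\big)\phi$, and applying $\Psi$ simply relabels the trailing $\phi$ as $\psi_\star$.

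Finally I would compute the right-hand side $\Psi(x)\ast\Psi(y)$ directly from the definition (\ref{Eq: *-product}) of $\ast$ and observe that, since $\psi_\star$ acts on elements exactly as $\psi$, the term $n_2\star_N\psi_\star(m')$ coincides with $n_2\star_N\psi(m')$; the two sides then agree component by component, proving multiplicativity and hence the lemma. The main obstacle is therefore organizational rather than conceptual: one must track the three components through the two bimodule actions without error and apply (\ref{Eq:def morphism (1)}) at exactly the right place, so that the cross-terms mixing the $A$- and $B$-data collapse into a single $\star_N$ product.
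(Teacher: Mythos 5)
Your proposal is correct and follows essentially the same route as the paper: both expand $x\star_{\psi!}y$ via $T_{\phi!}$ and the $\phi!$-bimodule actions, expand $\Psi(x)\ast\Psi(y)$ via Eq.~(\ref{Eq: *-product}) and Eq.~(\ref{Eq: star-product}), and match the two using $\phi\circ T_A=T_B\circ\psi$ from Eq.~(\ref{Eq:def morphism (1)}) in the third component. Your component-by-component bookkeeping is just a slightly more organized presentation of the paper's direct comparison of the two expanded expressions.
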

\begin{proof}
Clearly, $\Psi$ is a bijection.
We only to show that $\Psi$ is  an algebraic morphism.
By Eq. {\rm(\ref{Eq: !star-product})} and Eq. {\rm (\ref{Eq: *-product})}, we have
\begin{align}
&(m+n_1+n_2\phi)\star_{\psi!}(m^\prime+n_1^\prime+n_2^\prime\phi)\nonumber\\
\stackrel{\text{Eq.} (\ref{Eq: star-product})}{=}&
(m+n_1+n_2\phi)T_{\phi!}(m^\prime+n_1^\prime+n_2^\prime\phi)+T_{\phi!}(m+n_1+n_2\phi)(m^\prime+n_1^\prime+n_2^\prime\phi)\nonumber\\
\stackrel{\text{Eq.} (\ref{eq: T_phi})\text{Eq.}(\ref{Eq: phi product})}{=}&
mT_A(m^\prime)+n_1T_B(n^\prime_1)+(n_2\phi(T_A(m^\prime))+n_1T_B(n_2^\prime))\phi\nonumber\\
&+T_A(m)m^\prime+T_B(n_1)n_1^\prime+(T_B(n_1)n_2^\prime+T_B(n_2)\psi(m^\prime))\phi.\label{Eq:lem 3.5(1)}
\end{align}
and
\begin{align}
&(m+n_1+n_2\psi_\star)\ast(m^\prime+n_1^\prime+n_2^\prime\psi_\star)\nonumber\\
\stackrel{\text{Eq.} (\ref{Eq: *-product})}{=}&
m\star_M m^\prime+n_1\star_Nn_1^\prime+(n_2\star_N\psi_\star(m^\prime)+n_1\star_N n_2^\prime)\psi_\star \nonumber\\
\stackrel{\text{Eq.} (\ref{Eq: star-product})\text{Eq.} (\ref{Eq:def morphism (1)})}{=}&
mT_A(m^\prime)+T_A(m)m^\prime+n_1T_B(n_1^\prime)+T_B(n_1)n_1^\prime \nonumber\\
&+(n_2\phi(T_A(m^\prime))+n_1T_B(n_2^\prime))\psi_\star+(T_B(n_1)n_2^\prime+T_B(n_2)\psi_\star(m^\prime))\psi_\star. \label{Eq:lem 3.5(2)}
\end{align}
Comparing Eq. {\rm (\ref{Eq:lem 3.5(1)})} with Eq. {\rm (\ref{Eq:lem 3.5(2)})}, we have that $\Psi$ is an algebraic morphism.
\end{proof}
In development, it follows that by Lemma \ref{lem: Psi iso}
\begin{lem}\label{lem: iso bimodule star!}
$\psi!_\star$-bimodule $(\phi!_\star, l_{T_{\phi!}}, r_{T_{\phi!}})$ is equal to
$\psi_{\star}!$-bimodule $(\phi_\star !, l_\star,r_\star)$.
\end{lem}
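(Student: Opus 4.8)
The plan is to verify the equality of the two bimodule structures by a direct computation that parallels the proof of Lemma~\ref{lem: Psi iso}. First I would make the identification precise: just as the algebra isomorphism $\Psi$ of Lemma~\ref{lem: Psi iso} sends $m+n_1+n_2\phi$ to $m+n_1+n_2\psi_\star$, there is a corresponding linear identification $\Phi\colon \phi!_\star \to \phi_\star!$, $a+b_1+b_2\phi \mapsto a+b_1+b_2\psi_\star$, of the underlying spaces of the two bimodules. Saying the two bimodules are equal then means precisely that $\Phi$ intertwines the two left (resp. right) actions along $\Psi$; since $\Psi$ and $\Phi$ both act as the identity on the $A$- and $B$-summands and merely relabel the $\phi$-summand as the $\psi_\star$-summand, it suffices to show that the defining formulas agree termwise after this relabeling.

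For the left action, I would expand $l_{T_{\phi!}}(m+n_1+n_2\phi,\, a+b_1+b_2\phi)$ using its definition Eq.~(\ref{Eq: left_T_phi}). I would compute the first summand $T_{\phi!}(m+n_1+n_2\phi)(a+b_1+b_2\phi)$ by inserting the formula Eq.~(\ref{eq: T_phi}) for $T_{\phi!}$ and the product Eq.~(\ref{Eq: phi product}) of $\phi!$, and the second summand $T_{\phi!}\big((m+n_1+n_2\phi)(a+b_1+b_2\phi)\big)$ by first applying the right module action Eq.~(\ref{Eq:phi right module}) of $\phi!$ on $\psi!$ and then Eq.~(\ref{eq: T_phi}). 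Collecting the $A$-, $B$-, and $\phi$-components and recognizing each difference as an instance of $l_{T_A}$ or $l_{T_B}$ via Eq.~(\ref{Eq: left star module}), I expect to arrive at
\[
l_{T_{\phi!}}(m+n_1+n_2\phi,\, a+b_1+b_2\phi)
= l_{T_A}(m,a) + l_{T_B}(n_1,b_1) + \big(l_{T_B}(n_2,\phi(a)) + l_{T_B}(n_1,b_2)\big)\phi,
\]
which is exactly Eq.~(\ref{Eq:left action l_star}) after relabeling $\phi$ by $\psi_\star$.

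The right action is handled symmetrically: expanding $r_{T_{\phi!}}$ via Eq.~(\ref{Eq: right T phi}), using the left module action Eq.~(\ref{Eq:phi left module}) this time, and simplifying with Eq.~(\ref{Eq: right star module}) to match Eq.~(\ref{Eq:right action l_star}). The one place that requires the morphism hypothesis is the $\phi$-component of the right action, where the term $b_2\,\phi(T_A(m))$ must be matched with $T_B(b_2\psi(m))$; here I would invoke Eq.~(\ref{Eq:def morphism (1)}), $\phi\circ T_A = T_B\circ\psi$, to rewrite $\phi(T_A(m)) = T_B(\psi(m))$ and thereby identify the difference with $r_{T_B}(b_2,\psi(m))$ (recalling that $\psi$ and $\psi_\star$ agree as maps of sets). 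I do not anticipate a genuine obstacle: the work is entirely bookkeeping, and the only subtle points are the careful tracking of the $\phi$-summand and the single application of Eq.~(\ref{Eq:def morphism (1)}) noted above.
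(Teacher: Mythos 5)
Your proposal is correct and follows essentially the same route as the paper: define the obvious bijection $\Phi(a+b_1+b_2\phi)=a+b_1+b_2\psi_\star$, expand $l_{T_{\phi!}}$ and $l_\star$ termwise via Eqs.~(\ref{Eq: left_T_phi}), (\ref{eq: T_phi}), (\ref{Eq: phi product}), (\ref{Eq:phi right module}), (\ref{Eq:left action l_star}), (\ref{Eq: left star module}) and compare, then treat the right action symmetrically. Your extra observation that the morphism identity $\phi\circ T_A=T_B\circ\psi$ is needed precisely in the $\phi$-component of the right action is a detail the paper suppresses under ``Similarly,'' and it is accurate.
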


\begin{proof}
To construct a linear map $\Phi: \phi!_\star\rightarrow \phi_\star !$ defined by $$\Phi(a+b_1+b_2\phi)=a+b_1+b_2\psi_\star$$
for $a+b_1+b_2\phi\in \phi!_\star$ and $a+b_1+b_2\psi_\star\in \phi_\star !$.  $\Phi$ is obviously bijective.
By Eq. {\rm (\ref{Eq: left_T_phi})} and  Eq. {\rm (\ref{Eq:left action l_star})}, we have
\begin{align}
&l_{T_{\phi!}}(m+n_1+n_2\phi, a+b_1+b_2\phi) \nonumber\\
\stackrel{\text{Eq.} (\ref{Eq: left_T_phi})}{=}&
T_{\phi!}(m+n_1+n_2\phi)(a+b_1+b_2\phi)-T_{\phi!}((m+n_1+n_2\phi)(a+b_1+b_2\phi) \nonumber\\
\stackrel{\text{Eq.} (\ref{eq: T_phi}) \text{Eq.} (\ref{Eq: phi product})}{=}&
T_A(m)a+T_B(n_1)b_1+(T_B(n_1)b_2+T_B(n_2)\phi(a))\phi \nonumber\\
&-T_A(ma)-T_B(n_1b_1)-(T_B(n_2\phi(a))+T_B(n_1b_2))\phi \label{Eq:lem proof 1}
\end{align}
and
\begin{align}
&l_\star(m+n_1+n_2\psi_\star, a+b_1+b_2\psi_\star)\nonumber\\
\stackrel{\text{Eq.} (\ref{Eq:left action l_star})}{=}&
l_{T_A}(m,a)+l_{T_B}(n_1,b_1)+(l_{T_B}(n_2,\phi(a))+l_{T_B}(n_1,b_2)))\psi_\star \nonumber\\
\stackrel{\text{Eq.} (\ref{Eq: left star module})}{=}&
T_A(m)a-T_A(ma)+T_B(n_1)b_1-T_B(n_1b_1)\nonumber\\
&+(T_B(n_1)b_2+T_B(n_2)\phi(a))\psi_\star-(T_B(n_2\phi(a))+T_B(n_1b_2))\psi_\star \label{Eq:lem proof 2}.
\end{align}
By Eq. {\rm (\ref{Eq:lem proof 1})} and Eq. {\rm (\ref{Eq:lem proof 2})}, it follows that
\begin{align*}
&\Phi(l_{T_{\phi!}}(m+n_1+n_2\phi, a+b_1+b_2\phi)) \\
=&l_\star(m+n_1+n_2\psi_\star, a+b_1+b_2\psi_\star)\\
=&l_\star((\Psi, \Phi)(m+n_1+n_2\phi, a+b_1+b_2\phi)).
\end{align*}
Similarly, $\Phi\circ r_{T_{\phi!}}= r_\star\circ (\Phi, \Psi)$.
\end{proof}

By Lemma \ref{lem: iso bimodule star!}, we identify the Hochschild cochain $C^\bullet(\psi_{\star}!, \phi_\star!)$ with the Hochschild cochain
$C^\bullet(\psi!_\star, \phi!_\star)$.
From Lemma \ref{lem: CCT used}, we get the following main result, which  is called the Cohomology Comparison Theorem (CCT) of an $\mathcal{O}$-operator morphism.
\begin{thm}\label{thm: CCT of O-operator}
Let $(\psi, \phi)$ be an $\mathcal{O}$-operator morphism from $T_A: M\rightarrow A$ to $T_B: N\rightarrow B$. Then the cohomology $H^\bullet(\psi_\star, \phi)$ of an $\mathcal{O}$-operator morphism $(\phi, \psi)$ is isomorphic to the cohomology $H^\bullet(\psi!_\star, \phi!_\star)$ of an $\mathcal{O}$-operator $T_{\phi!}: \psi!\rightarrow \phi!$.
\end{thm}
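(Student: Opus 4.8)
The plan is to obtain the desired isomorphism as a composite of identifications already assembled in the preceding lemmas, so that no fresh cochain-level computation is needed beyond checking compatibility with differentials. First I would invoke Lemma~\ref{lem: CCT used}: applied to the algebra morphism $\psi_\star\colon M_\star\to N_\star$ and the $\psi_\star$-bimodule $\langle A,B,\phi_\star\rangle$, the Cohomology Comparison Theorem for associative algebra morphisms gives
\[
H^\bullet(\psi_\star,\phi)\;\cong\;H^\bullet(\psi_\star!,\phi_\star!),
\]
where the right-hand side is the Hochschild cohomology of $\psi_\star!$ with coefficients in $\phi_\star!$. This reduces the theorem to identifying $H^\bullet(\psi_\star!,\phi_\star!)$ with the cohomology of the auxiliary $\mathcal{O}$-operator $T_{\phi!}$, which by Definition~\ref{lem:cohomo of O-operator} is exactly the Hochschild cohomology $H^\bullet(\psi!_\star,\phi!_\star)$ of $\psi!_\star$ with coefficients in $\phi!_\star$.

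Second, I would feed in Lemma~\ref{lem: Psi iso} and Lemma~\ref{lem: iso bimodule star!}. The former provides an algebra isomorphism $\Psi\colon(\psi!_\star,\star_{\psi!})\to(\psi_\star!,\ast)$, and the latter a bijection $\Phi\colon\phi!_\star\to\phi_\star!$ intertwining the two bimodule structures, namely $\Phi\circ l_{T_{\phi!}}=l_\star\circ(\Psi,\Phi)$ and $\Phi\circ r_{T_{\phi!}}=r_\star\circ(\Phi,\Psi)$. Thus $(\Psi,\Phi)$ is an isomorphism of pairs (associative algebra, bimodule) from $(\psi!_\star,\phi!_\star)$ to $(\psi_\star!,\phi_\star!)$. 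By functoriality of the Hochschild complex, pullback along $(\Psi,\Phi)$, given on $n$-cochains by $g\mapsto\Phi^{-1}\circ g\circ\Psi^{\otimes n}$, is an isomorphism of graded spaces $C^\bullet(\psi_\star!,\phi_\star!)\to C^\bullet(\psi!_\star,\phi!_\star)$, whence $H^\bullet(\psi_\star!,\phi_\star!)\cong H^\bullet(\psi!_\star,\phi!_\star)$.

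Finally, I would chain the two isomorphisms,
\[
H^\bullet(\psi_\star,\phi)\;\cong\;H^\bullet(\psi_\star!,\phi_\star!)\;\cong\;H^\bullet(\psi!_\star,\phi!_\star),
\]
and read the right-hand side as the cohomology of the $\mathcal{O}$-operator $T_{\phi!}\colon\psi!\to\phi!$, which is the claim.

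The step I expect to be the main obstacle, albeit a routine one, is verifying that the graded isomorphism induced by $(\Psi,\Phi)$ genuinely commutes with the Hochschild differentials, rather than merely matching the underlying spaces. Concretely, one must substitute the algebra compatibility of $\Psi$ (for the inner multiplication terms) and the two bimodule-compatibility identities of Lemma~\ref{lem: iso bimodule star!} (for the first and last slots, which involve the coefficient actions) into each summand of the Hochschild coboundary formula, and confirm that the terms match one by one. This is precisely where all the structural content packaged in Lemmas~\ref{lem: Psi iso} and \ref{lem: iso bimodule star!} is consumed; once this intertwining is checked, the isomorphism on cohomology follows formally.
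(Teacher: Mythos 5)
Your proposal is correct and follows essentially the same route as the paper: it chains Lemma~\ref{lem: CCT used} (the Gerstenhaber--Schack CCT applied to $\psi_\star$ and the bimodule $\langle A,B,\phi_\star\rangle$) with the identification of $(\psi!_\star,\phi!_\star)$ and $(\psi_\star!,\phi_\star!)$ supplied by Lemmas~\ref{lem: Psi iso} and \ref{lem: iso bimodule star!}. Your added remark about verifying that the induced cochain map intertwines the Hochschild differentials is a reasonable point of care that the paper leaves implicit.
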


\section{Applications}\label{sect: applications}
It was shown  that $\mathcal{O}$-operators generalize associative Rota-Baxter
operators of weight $0$. They also generalize associative r-matrices \cite{Agu}. Therefore, we may
study the Cohomologies Comparison Theorem (CCT) of  Rota-Baxter operator morphisms of weight zero and associative $r$-matrice morphisms as
a particular case of cohomologies of $\mathcal{O}$-operator morphisms.

\subsection{The Cohomologies Comparison Theorem of  Rota-Baxter operator morphisms.}
As is well-known, a Rota-Baxter operator (of weight $0$) on an associative algebra $A$ can be considered as an $\mathcal{O}$-operator on $A$ with respect to the adjoint bimodule $A$.

Given a Rota-Baxter operator $R: A\rightarrow A$ (of weight $0$), $A$ carries a new associative product $a\star b =aR(b)+R(a)b$  for $a, b\in A$. This associative algebra $(A, \star)$ has a bimodule representation on $A$ given by $l_a(b) = R(a) b - R(a b)$ and $r_a(b) = b R(a) -R(b  a)$. The cohomology of the associative algebra $(A, \star)$ with coefficients in the above bimodule
structure on $A$ is called the cohomology of the Rota-Baxter operator $R$.

Let $R_A: A\rightarrow A$ and $R_B:B\rightarrow B$ be  Rota-Baxter operators of weight $0$. Suppose that $\phi: R_A\rightarrow R_B$ is a Rota-Baxter operator morphism, where $\phi: A\rightarrow B$ is an algebraic morphism such that (See Definition \ref{Def: operator morphisms})
\begin{align}
R_B\circ \phi &=\phi\circ R_A \label{Eq: RB-operator}.
\end{align}

Since $A$ is the adjoint bimodule over itself, $C^n(\phi_\star, \phi)=C^n(A_\star, A)\oplus C^n(B_\star,B)\oplus C^{n-1}(A_\star,B), n\geqslant 1$ is defined as a cochain complex of a Rota-Baxter operator morphism $\phi: R_A\rightarrow R_B$, which differential is
\begin{align}\label{Eq: chain map 2}
\delta_{R}^n(f, g,h)=(d_{A}^nf, d_{B}^n g, \phi f-g \phi_\star^{\otimes n}-d_{A,B}^{n-1}h),
\end{align}
for $(f, g, h)\in C^n(A_\star, A)\oplus C^n (B_\star, B)\oplus C^{n-1}(A_\star, B)$. Then the  corresponding cohomology $H^\bullet(\phi_\star, \phi)$ is defined as the cohomology of a Rota-Baxter operator morphism $\phi$. (See Definition \ref{def: cohomology operator morphism})

Given a Rota-Baxter operator morphism $\phi: R_A\rightarrow R_B$,  we have a new Rota-Baxter operator (of weight $0$) $R_{\phi !}: \phi !\rightarrow \phi!$, where $R_{\phi !}(a+b_1+b_2\phi)=R_A(a)+R_B(b_1)+R_B(b_2)\phi, a+b_1+b_2\phi \in \phi!= A+B+ B\phi$ (See Lemma \ref{lem: new o-operator}). Then it follows that by Theorem \ref{thm: CCT of O-operator}
\begin{prop}
Suppose that $\phi: R_A\rightarrow R_B$ is a Rota-Baxter operator morphism, where $R_A: A\rightarrow A$ and $R_B:B\rightarrow B$ are Rota-Baxter operators of weight $0$. Then the cohomology of a Rota-Baxter operator morphism $\phi$ is isomorphic to the cohomology of
$R_{\phi !}: \phi !\rightarrow \phi!$.
\end{prop}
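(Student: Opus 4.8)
The plan is to exhibit the Rota-Baxter data as a special instance of the $\mathcal{O}$-operator framework and then apply Theorem \ref{thm: CCT of O-operator} verbatim.

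First I would recall that a Rota-Baxter operator $R_A\colon A\rightarrow A$ of weight $0$ is precisely an $\mathcal{O}$-operator $T_A\colon M\rightarrow A$ in which $M=A$ is the adjoint bimodule, and likewise $R_B$ is $T_B\colon N\rightarrow B$ with $N=B$. Under this identification I would check that the pair $(\phi,\phi)$, obtained by taking $\psi=\phi$, is an $\mathcal{O}$-operator morphism in the sense of Definition \ref{Def: operator morphisms}: condition (\ref{Eq:def morphism (1)}) is exactly the compatibility $R_B\circ\phi=\phi\circ R_A$ of (\ref{Eq: RB-operator}), while (\ref{Eq:def morphism (2)}) and (\ref{Eq:def morphism (3)}) read $\phi(a)\phi(b)=\phi(ab)$ and $\phi(b)\phi(a)=\phi(ba)$, both of which hold because $\phi$ is an algebra morphism and the left/right actions are the adjoint ones $ad^l_a$, $ad^r_a$. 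Hence $\psi_\star=\phi_\star$, and the cohomology $H^\bullet(\phi_\star,\phi)$ of the Rota-Baxter morphism is by construction the specialization of the $\mathcal{O}$-operator morphism cohomology of Definition \ref{def: cohomology operator morphism}.

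Next I would identify the auxiliary $\mathcal{O}$-operator. Because $M=A$ and $N=B$, we have $\psi!=M+N+N\phi=A+B+B\phi=\phi!$, so the operator $T_{\phi!}\colon\psi!\rightarrow\phi!$ produced by Lemma \ref{lem: new o-operator} via formula (\ref{eq: T_phi}) becomes $T_{\phi!}(a+b_1+b_2\phi)=R_A(a)+R_B(b_1)+R_B(b_2)\phi$, which is exactly $R_{\phi!}$. Since $\phi!$ is the adjoint bimodule over itself, $R_{\phi!}=T_{\phi!}$ is a Rota-Baxter operator of weight $0$ on $\phi!$, and the Hochschild cohomology associated with the $\mathcal{O}$-operator $T_{\phi!}$ is precisely the cohomology of the Rota-Baxter operator $R_{\phi!}$.

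With these identifications in place, Theorem \ref{thm: CCT of O-operator} applied to the $\mathcal{O}$-operator morphism $(\phi,\phi)$ yields an isomorphism between $H^\bullet(\phi_\star,\phi)$ and the cohomology of the $\mathcal{O}$-operator $T_{\phi!}=R_{\phi!}$, which is the assertion. The only genuine work is the bookkeeping in the second step: one must confirm that the general star product and the bimodule maps $l_{T_A},r_{T_A}$ of Lemma \ref{lem:star product} restrict, under the adjoint identification, to the Rota-Baxter product $a\star b=aR(b)+R(a)b$ and the bimodule $l_a(b)=R(a)b-R(ab)$, $r_a(b)=bR(a)-R(ba)$ used to define the Rota-Baxter cohomology. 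This is a direct substitution of $ad^l_a$, $ad^r_a$ into (\ref{Eq: star-product})--(\ref{Eq: right star module}), after which the isomorphism follows immediately from the main theorem.
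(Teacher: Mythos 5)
Your proposal is correct and follows exactly the route the paper takes: the paper likewise obtains this proposition as an immediate specialization of Theorem \ref{thm: CCT of O-operator}, identifying $R_A$, $R_B$ as $\mathcal{O}$-operators on the adjoint bimodules, taking $\psi=\phi$, and observing that $T_{\phi!}$ becomes $R_{\phi!}$ on $\psi!=\phi!=A+B+B\phi$. Your write-up merely makes explicit the bookkeeping that the paper leaves implicit.
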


\subsection{The Cohomologies Comparison Theorem of associative $r$-matrices weak morphisms.}

Aguiar\cite{Agu} introduces the notion of associative $r$-matrix as an associative analogue of classical $r$-matrix. An associative r-matrix can be considered as an $\mathcal{O}$-operator.

Let $A$ be an associative algebra and $r\in \wedge^2 A$, where $r$ induces a skew-symmetric linear map $r^{\sharp}: A^\ast\rightarrow A$ defined by
\begin{align}
\langle \nu, r^\sharp(\mu)\rangle=r(\mu, \nu), \mu, \nu\in A^\ast,
\end{align}
where $\langle , \rangle$ denotes the pairing between the elements of $A$ and $A^\ast$.

\begin{definition}
Let $A$ be an associative algebra. Then $r\in \wedge^2 A$ is called an associative $r$-matrix if $r$ satisfies $[[r, r]] = 0$, where
$[[r, r]] \in A \otimes A \otimes A$ is given by
\begin{align*}
[[r, r]](\mu, \nu, \omega)=\langle r^\sharp(\mu)r^\sharp(\nu), \omega\rangle + \langle r^\sharp(\nu)r^\sharp(\omega), \mu\rangle
+\langle r^\sharp(\omega)r^\sharp(\mu), \nu\rangle,
\end{align*}
for $\mu, \nu, \omega\in A^\ast$.
\end{definition}

The following result tells us the relation between associative $r$-matrixs and $\mathcal{O}$-operators.

\begin{prop}{\rm (\cite{Bai2})}\label{prop: O-operator and r-matrice}
$r\in \wedge^2 A$ is an associative $r$-matrix if and only if the induced
map $r^\sharp : A^\ast\rightarrow A$ is an $\mathcal{O}$-operator on $A$ with respect to the coadjoint $A$-bimodule $A^\ast$.
\end{prop}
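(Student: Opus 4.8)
The plan is to prove both implications at once by computing, for arbitrary $\mu, \nu, \omega \in A^\ast$, the scalar obtained by pairing the $\mathcal{O}$-operator defect
\[
D(\mu, \nu) := r^\sharp(\mu)\,r^\sharp(\nu) - r^\sharp\big(\mu\, r^\sharp(\nu) + r^\sharp(\mu)\,\nu\big) \in A
\]
against $\omega$, and showing that it equals exactly $[[r,r]](\mu, \nu, \omega)$. Since the evaluation pairing $A \times A^\ast \to \field$ is nondegenerate on the $A$-factor (if $\langle a, \omega\rangle = 0$ for all $\omega$, then $a = 0$, with no finiteness hypothesis needed), the condition $D(\mu,\nu) = 0$ for all $\mu,\nu$, which is precisely Eq.~(\ref{Eq: O-operator}) for the coadjoint bimodule, becomes equivalent to $[[r,r]](\mu,\nu,\omega) = 0$ for all $\mu,\nu,\omega$. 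This is the asserted equivalence.

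The computation rests on two ingredients that I would record first. Ingredient one: the skew-symmetry $r \in \wedge^2 A$ translates, through the defining relation $\langle \nu, r^\sharp(\mu)\rangle = r(\mu,\nu)$, into the identity $\langle \omega, r^\sharp(\xi)\rangle = -\langle \xi, r^\sharp(\omega)\rangle$ for all $\xi, \omega \in A^\ast$. Ingredient two: the explicit form of the coadjoint actions recalled in Section~\ref{sect: O-operator}. The right action is $\big(\mu\, r^\sharp(\nu)\big)(b) = \mu\big(r^\sharp(\nu)\,b\big)$ and the left action is $\big(r^\sharp(\mu)\,\nu\big)(b) = \nu\big(b\, r^\sharp(\mu)\big)$, so these two functionals differ precisely by the order of multiplication inside.

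With these in hand I would expand $\langle D(\mu,\nu), \omega\rangle$ term by term. The leading term $\langle r^\sharp(\mu)r^\sharp(\nu), \omega\rangle$ is already the first summand of $[[r,r]]$. For each of the remaining two terms I would first apply the skew-symmetry identity to move $\omega$ out of $r^\sharp(\cdots)$, and then evaluate the coadjoint functional at $r^\sharp(\omega)$. A short manipulation yields
\[
\langle r^\sharp\big(\mu\, r^\sharp(\nu)\big), \omega\rangle = -\langle r^\sharp(\nu)r^\sharp(\omega), \mu\rangle,
\qquad
\langle r^\sharp\big(r^\sharp(\mu)\,\nu\big), \omega\rangle = -\langle r^\sharp(\omega)r^\sharp(\mu), \nu\rangle,
\]
so the two minus signs carried by $D$ regenerate precisely the second and third cyclic summands of $[[r,r]](\mu,\nu,\omega)$. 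Collecting, $\langle D(\mu,\nu), \omega\rangle = [[r,r]](\mu,\nu,\omega)$.

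I do not expect a serious obstacle: the statement reduces to a bookkeeping identity, and the only genuine care point is tracking the signs produced by the skew-symmetry identity while correctly distinguishing the left and right coadjoint actions. Notably, the full cyclic sum defining $[[r,r]]$ emerges automatically from the skew-symmetry relation, so no separate symmetrization step is required.
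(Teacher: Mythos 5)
Your proof is correct, and since the paper only cites this result from Bai's work without reproducing a proof, there is nothing to diverge from: your computation showing $\langle r^\sharp(\mu)r^\sharp(\nu)-r^\sharp(\mu\,r^\sharp(\nu))-r^\sharp(r^\sharp(\mu)\nu),\,\omega\rangle=[[r,r]](\mu,\nu,\omega)$ is the standard argument, and the sign bookkeeping through the skew-symmetry identity and the two coadjoint actions $(\mu\,a)(b)=\mu(ab)$, $(a\,\nu)(b)=\nu(ba)$ checks out. The appeal to injectivity of $A\to A^{**}$ to pass from ``$\langle D(\mu,\nu),\omega\rangle=0$ for all $\omega$'' to $D(\mu,\nu)=0$ is also valid without any finite-dimensionality assumption.
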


Let $A$ be an associative algebra and $r_1, r_2 \in \wedge^2 A$ be two $r$-matrices.
\begin{definition}
A weak morphism from $r_1$ to $r_2$ consists of a pair $(\phi, \psi)$ of an associative algebra
morphism $\phi: A \rightarrow A$ and a linear map $\psi : A \rightarrow A$ satisfying
\begin{align}
(\psi \otimes \Id_A)(r_2)&=(\Id_A\otimes \phi)(r_1),\nonumber\\
\psi(\phi(a)b)&=a\psi(b),\nonumber\\
\psi(a\phi(b))&=\psi(a)b.\nonumber\\
\end{align}
\end{definition}

The weak morphism of associative $r$-matrices is also related to the morphism between corresponding $\mathcal{O}$-operators in \cite{Das} as follows.

\begin{prop}
Let $r_1, r_2 \in \wedge^2 A$ be two associative $r$-matrices on an associative algebra $A$. Then a pair $(\phi, \psi)$ is a weak morphism from $r_1$ to $r_2$ if and only if $(\phi, \psi^\ast)$ is an $\mathcal{O}$-operator morphism  from $r^\sharp_1$ to $r^\sharp_2$.
\end{prop}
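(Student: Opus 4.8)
The plan is to unwind both notions side by side and check that the three defining relations of an $\mathcal{O}$-operator morphism correspond, one for one, to the three defining relations of a weak morphism. By Proposition~\ref{prop: O-operator and r-matrice}, the maps $r_1^\sharp,r_2^\sharp\colon A^\ast\to A$ are $\mathcal{O}$-operators on $A$ with respect to the coadjoint bimodule $A^\ast$. Hence, applying Definition~\ref{Def: operator morphisms} with $T_A=r_1^\sharp$, $T_B=r_2^\sharp$, and the linear map $\psi^\ast\colon A^\ast\to A^\ast$, the pair $(\phi,\psi^\ast)$ is an $\mathcal{O}$-operator morphism if and only if the three equations $r_2^\sharp\circ\psi^\ast=\phi\circ r_1^\sharp$, $\phi(a)\,\psi^\ast(\mu)=\psi^\ast(a\mu)$, and $\psi^\ast(\mu)\,\phi(a)=\psi^\ast(\mu a)$ all hold, where juxtaposition denotes the coadjoint action on $A^\ast$. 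The proposition will then follow by showing each of these is equivalent to one of the weak-morphism relations; since every step is reversible, both implications are obtained at once.

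First I would dispatch the two module-compatibility relations. Recalling the coadjoint formulas $l(a,f)(b)=f(ba)$ and $r(f,a)(b)=f(ab)$ together with $\psi^\ast(\mu)=\mu\circ\psi$, I would evaluate $\phi(a)\,\psi^\ast(\mu)=\psi^\ast(a\mu)$ on an arbitrary $b\in A$: the left-hand side yields $\mu(\psi(b\phi(a)))$ and the right-hand side yields $\mu(\psi(b)a)$. As this holds for every $\mu\in A^\ast$, nondegeneracy of the evaluation pairing makes the relation equivalent to $\psi(b\phi(a))=\psi(b)a$ for all $a,b$, which is exactly the weak-morphism identity $\psi(a\phi(b))=\psi(a)b$. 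The identical evaluation of $\psi^\ast(\mu)\,\phi(a)=\psi^\ast(\mu a)$ produces $\psi(\phi(a)b)=a\psi(b)$, the remaining weak-morphism identity.

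The last and most delicate equivalence is $r_2^\sharp\circ\psi^\ast=\phi\circ r_1^\sharp\ \Longleftrightarrow\ (\psi\otimes\Id_A)(r_2)=(\Id_A\otimes\phi)(r_1)$, and I expect this to be the main obstacle, since it requires passing carefully between a statement about the maps $r_i^\sharp$ and a statement about the tensors $r_i$ under the convention $\langle\nu,r^\sharp(\mu)\rangle=r(\mu,\nu)$. Writing $r_1=\sum_i x_i\otimes y_i$ and $r_2=\sum_j u_j\otimes v_j$, this convention gives $r_1^\sharp(\mu)=\sum_i\langle\mu,x_i\rangle\,y_i$ and $r_2^\sharp(\mu)=\sum_j\langle\mu,u_j\rangle\,v_j$. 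Pairing both sides of $r_2^\sharp\circ\psi^\ast=\phi\circ r_1^\sharp$ against an arbitrary $\nu\in A^\ast$ turns the equality of elements of $A$ into the scalar identity $\sum_j\langle\mu,\psi(u_j)\rangle\langle\nu,v_j\rangle=\sum_i\langle\mu,x_i\rangle\langle\nu,\phi(y_i)\rangle$, which is precisely the evaluation of $\mu\otimes\nu$ on $(\psi\otimes\Id_A)(r_2)=(\Id_A\otimes\phi)(r_1)$. The care needed is in tracking which tensor leg each of $\mu,\nu$ acts on and in verifying that $\psi^\ast$ transfers $\psi$ onto the first leg of $r_2$; once both sides are paired against $\mu\otimes\nu$, the two statements coincide. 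Assembling the three equivalences then yields the proposition.
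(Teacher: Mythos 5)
The paper states this proposition without any proof (it is essentially imported from Das's work on $\mathcal{O}$-operator morphisms), so there is no argument of the paper's to compare yours against; your direct verification fills the gap correctly. The two module-compatibility identities do unwind under the coadjoint actions exactly as you compute --- $\phi(a)\psi^\ast(\mu)=\psi^\ast(a\mu)$ evaluated at $b$ gives $\mu(\psi(b\phi(a)))=\mu(\psi(b)a)$, hence $\psi(a\phi(b))=\psi(a)b$ after relabeling, and symmetrically for the other --- and your pairing computation identifying $r_2^\sharp\circ\psi^\ast=\phi\circ r_1^\sharp$ with $(\psi\otimes\Id_A)(r_2)=(\Id_A\otimes\phi)(r_1)$ is the right one under the convention $\langle\nu,r^\sharp(\mu)\rangle=r(\mu,\nu)$. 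Since functionals of the form $\mu\otimes\nu$ separate points of $A\otimes A$, every step is indeed reversible and both implications follow.
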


Then we consider the cohomology of  an $\mathcal{O}$-operator morphism $(\phi, \psi^\ast)$ from $r^\sharp_1: A^*\rightarrow A$ to $r^\sharp_2: A^*\rightarrow A$  as the cohomology of a weak morphism $(\phi, \psi)$ from $r_1$ to $r_2$.

By Lemma \ref{lem: new o-operator}, there exists an $\mathcal{O}$-operator $r_{\phi !}^{\sharp}: \psi^\ast!\rightarrow \phi!$ defined by
\begin{align*}
r_{\phi !}^{\sharp}(\mu+\nu_1+\nu_2\phi)=r_1^\sharp(\mu)+r_2^\sharp(\nu_1)+r_2^\sharp(\nu_2)\phi
\end{align*}
for $\mu+\nu_1+\nu_2\phi \in \psi^\ast!$,
where $\psi^\ast!=A^*+A^*+A^*\phi$ and $\phi!=A+A+A\phi$. By Theorem \ref{thm: CCT of O-operator}, the cohomology of  the $\mathcal{O}$-operator $r_{\phi !}^{\sharp}$ is isomorphic to  the cohomology of an $\mathcal{O}$-operator morphism $(\phi, \psi^\ast)$ from $r^\sharp_1$ to $r^\sharp_2$.

If $\psi^\ast!=A^*+A^*+A^*\phi\triangleq (A+A+A\phi)^\ast=(\phi!)^\ast$ as $\phi!$-module, then  the $\mathcal{O}$-operator $r_{\phi !}^{\sharp}$ corresponds to  some  $r$-matrice $r_{\phi!}\in \phi!\wedge \phi!$ (See Proposition \ref{prop: O-operator and r-matrice}).  Therefore, we get the Cohomologies Comparison Theorem (CCT) of associative $r$-matrices weak morphisms.

\end{document}